\newtheorem{theorem}{\indent\sc Theorem}[subsection]
\newtheorem*{theorem2}{\indent\sc Theorem}
\newtheorem{lemma}[theorem]{\indent\sc Lemma}
\newtheorem{proposition}[theorem]{\indent\sc Proposition}
\newenvironment{remark}{\indent\textsc{Remark.}}{}
\newcommand{\R}{\mathbb{R}}
\newcommand{\C}{\mathbb{C}}
\newcommand{\Q}{\mathbb{Q}}
\newcommand{\Z}{\mathbb{Z}}
\newcommand{\N}{\mathbb{N}}
\newcommand{\QFT}{\Q \textnormal{-factorial terminalisation}}
\newcommand{\QFl}{\Q \textnormal{-factorial}}
\newcommand{\yogh}{\textctyogh}
\newcommand{\yoghsp}{\yogh$ $ }
\newcommand{\myogh}{\textnormal{\yogh}}
\newcommand{\bihom}{\textnormal{bihom}}
\begin{document}

\title{A new addition to the zoo of isolated symplectic singularities}

\author[C.~Berry]{Callum Berry}

\begin{abstract}
We give details of a new isolated symplectic singularity found in an affine chart in a crepant partial resolution of $\C^4/G_5$,
which is 4-dimensional, isolated, and locally simply-connected.
We distinguish the new singularity among all known such by the fact that the projective tangent cone at the singularity is non-reduced.
We also find all 12 of its $\QFl$ terminalisations, in the process finding 24 for the quotient singularity $\C^4/G_5$.
\end{abstract}

\maketitle

\section{Introduction}

In recent years there has been substantial interest in symplectic singularities, as introduced by Beauville (\cite{Beauville}).
This is particularly motivated by connections with representation theory and mathematical physics.

Basic examples of isolated symplectic singularities are the rational double points
and the closures of minimal nilpotent orbits of simple complex Lie algebras.
Additionally, given a symplectic singularity $X$ and a finite group $\Gamma$ of automorphisms preserving the symplectic form on the smooth locus $X_{reg}$,
the quotient $X/\Gamma$ is also a symplectic singularity.
Hence, quotients of $\C^{2n}$ by finite subgroups of $\text{Sp}_{2n}(\C)$ that act freely away the origin,
and quotients of isolated symplectic singularities by finite groups that act freely on the smooth locus and preserve its symplectic form,
are also isolated symplectic singularities.

The main result of Beauville's seminal paper is a partial classification result:
\begin{theorem2}
Let $(V,o)$ be an isolated symplectic singularity, whose projective tangent cone is smooth.
Then $(V,o)$ is analytically isomorphic to the closure of the minimal nilpotent orbit of some simple complex Lie algebra.
\end{theorem2}

Further examples of isolated symplectic singularities have since been found.
The quasi-minimal singularities $ac_n$, $ag_2$ and $cg_2$ were found as elementary slices of affine Grassmannians in 2003 by Malkin-Ostrik-Vybornov (\cite[\S 6.3]{QuasiMinOrig}).
An infinite family $\{\mathcal{Y}(d) : d \geq 4\}$ derived from quotients of $\C^4$ by dihedral groups was found in 2021 by Bellamy-Bonnafé-Fu-Juteau-Levy-Sommers (\cite{LevyEtAl}).
An infinite collection of singularities $Y(A,0)$ given with a toric hyper-Kähler construction was found in 2023 by Namikawa (\cite{Namikawa}), which subsumes the quasi-minimal singularities (Appendix \ref{h_2c3} for $ag_2$).

It follows from the remarks in \cite[\S 4.1]{Beauville} and the fact that the local fundamental group is finite \cite{LocFdGpFin},
that any isolated symplectic singularity with non-trivial local fundamental group can be realised as the quotient of a symplectic singularity by a finite group.
As remarked by Beauville, this reduces the classification of isolated symplectic singularities to those which are locally simply-connected.

In the years since Beauville's paper,
a different approach to classification has emerged,
given by the minimal model program.
This focuses on $\QFl$ terminal singularities,
and aims to classify the maximal crepant partial resolutions of singularities (not necessarily unique).
However, as remarked in section \ref{QFT}, these two perspectives cannot be considered simultaneously.

The simplest singularities to consider would be those of the lowest dimension.
The 2-dimensional case is precisely the rational double points,
which are well known,
so we consider the 4-dimensional case instead.
Notably, a symplectic singularity has terminal singularities precisely when the codimension of the singular locus is at least 4 (\cite{NamNote}), so these isolated singularities will be terminal.

We consider the areas where one would look for new examples.
There is a rich source from the Coulomb branches of 3d $\mathcal{N}=4$ gauge theories (\cite{CoulSymp}).
Those of dimension 4 with unitary quivers have been classified (\cite{CoulClass1}),
and those of arbitrary dimension with gauge group a torus have also (\cite{CoulClass2}).
One could also look at the crepant partial resolutions of other symplectic singularities (\cite{PartResoln}).
This is motivated by the success in the case of the $\mathcal{Y}(d)$.

We follow a similar approach in this paper.
It is known that $\C^4/G_4$ has two symplectic resolutions
(and no new isolated singularities appear),
so we consider the next exceptional Shephard-Todd group, $G_5$.
The calculations are much more involved but it is worthwhile.
We find that an affine chart in the blow-up at the reduced singular locus of $\C^4/G_5$ is a new isolated symplectic singularity, which we denote \yogh.

The first endeavour of the paper is to outline the calculations required to obtain \yogh.
We then prove \yoghsp is locally simply-connected, along with detailing some useful symmetries and the Hilbert series.
The main result of the paper is then proven: that \yoghsp is distinct from all other currently known symplectic singularities.
Specifically, it has a projective tangent cone at the singularity which is non-reduced,
whilst all other known locally simply-connected, isolated symplectic singularities have a projective tangent cone at the singularity which is reduced.
Finally we give all 12 of the $\QFT$s of \yogh, allowing us to describe 24 of the 92 $\QFT$s of $\C^4/G_5$.

\subsection{Acknowledgements}

The author thanks Paul Levy for his supervision and assistance,
and the EPSRC who have funded the PhD research of which this paper forms part.
The author also thanks Daniel Juteau and Antoine Bourget for useful discussions.

\section{Algebraic Preliminaries}

Let $G$ be a complex reflection group $G \subseteq \text{GL}_2(\C)$,
acting on $V = \C^2$ and hence on the cotangent space $W = T^*V = V \oplus V^* \cong \C^4$,
preserving the natural symplectic form.
The complex reflections of $V$ in $G$ act on $W$ as symplectic reflections, fixing a 2-dimensional subspace.
As they generate $G$, this makes $G$ a symplectic reflection group of $W$, as a subgroup of $\text{Sp}_4(\C)$.

Using $G_n$ to denote the $n$-th exceptional complex reflection group in the Shepherd-Todd classification (\cite{ShepTodd}),
the first to consider is $G_4$.
Lehn and Sorger (\cite{LehnSorger}) have constructed two symplectic resolutions of $C^4/G_4$.
These were obtained by blowing up at a space containing the singular locus,
obtaining non-isolated singularities, which resolve upon a further blow-up.
There is one further partial resolution not described,
and it gives only an $a_2$ singularity (\cite{MyThesis}).
Therefore none of the partial resolutions involved yield any new isolated symplectic singularities.

The next natural choice is to investigate $\C^4/G_5$.
We realise $G_5$ as a complex reflection group in $\text{GL}_2(\C)$, as follows:
$$
a = \begin{pmatrix}
i & 0 \\
0 & -i
\end{pmatrix} \kern-2pt, \kern4pt
b = \begin{pmatrix}
0 & 1 \\
-1 & 0
\end{pmatrix} \kern-2pt, \kern4pt
c = \frac{\omega}{\sqrt{2}} \begin{pmatrix}
1 & -1 \\
i & i
\end{pmatrix} \kern-2pt, \kern4pt
d = \begin{pmatrix}
\sigma & 0 \\
0 & \sigma
\end{pmatrix} \kern-2pt,
$$
with $\omega = e^{\frac{\pi i}{12}}, \sigma = e^{\frac{2 \pi i}{3}} = \omega^8$.
Then
$$G_4 = \langle a, b, c \mid a^4 = e, b^2 = a^2, ba = a^{-1}b, c^3 = e, ca = bc, cb = a^3bc \rangle,$$
and $G_5 = \langle G_4, d \rangle$ is an extension of this by a central cyclic subgroup of order 3.

To find the invariants of the coordinate algebra of $W = \C^4$,
note that $G_5$ is generated by $a,b,c$ and $\overline{c} = acd^2$.
Consider the eigenvectors in $\C[W]^{\langle a,b \rangle}$ of $c$ and $\overline{c}$.
Using overline to represent complex conjugation of polynomials, we have the eigenvectors as follows:

$$
\begin{array}{rcl}
F_{1,1} &\kern-5pt = &\kern-5pt xX + yY, \\
F_{4,0} &\kern-5pt = &\kern-5pt x^4 + y^4 - 2i\sqrt{3}x^2y^2, \\
F_{3,1} &\kern-5pt = &\kern-5pt x^3Y - i\sqrt{3}x^2yX + i\sqrt{3}xy^2Y - y^3X, \\
G_{2,2} &\kern-5pt = &\kern-5pt x^2X^2 + i\sqrt{3}x^2Y^2 + i\sqrt{3}y^2X^2 + y^2Y^2 - 4xyXY, \\
F_{1,3} &\kern-5pt = &\kern-5pt xY^3 + i\sqrt{3}yXY^2 - i\sqrt{3}xX^2Y - yX^3, \\
F_{0,4} &\kern-5pt = &\kern-5pt X^4 + Y^4 + 2i\sqrt{3}X^2Y^2, \\
F_{6,0} &\kern-5pt = &\kern-5pt x^5y - xy^5, \\
G_{5,1} &\kern-5pt = &\kern-5pt x^5X - 5x^4yY - 5xy^4X + y^5Y, \\
G_{4,2} &\kern-5pt = &\kern-5pt x^4XY - 2x^3yY^2 + 2xy^3X^2 - y^4XY, \\
F_{3,3} &\kern-5pt = &\kern-5pt x^3XY^2 - x^2yY^3 - xy^2X^3 + y^3X^2Y, \\
G_{2,4} &\kern-5pt = &\kern-5pt 2x^2XY^3 + xyX^4 - xyY^4 - 2y^2X^3Y, \\
G_{1,5} &\kern-5pt = &\kern-5pt xX^5 - 5xXY^4 - 5yX^4Y + yY^5 \\
F_{0,6} &\kern-5pt = &\kern-5pt X^5Y - XY^5,
\end{array}
$$
with the eigenvalues as follows:
\renewcommand{\arraystretch}{1.2}
\begin{center}
\begin{tabular}{| c | c | c | c | c |}
\cline{3-5}
\multicolumn{2}{c|}{\multirow{2}{*}{}} & \multicolumn{3}{|c|}{$c$}
\\ \cline{3-5}
\multicolumn{2}{c|}{} & $1$ & $\sigma$ & $\sigma^2$
\\ \hline
\multirow{4}{*}{$\overline{c}$} &
\multirow{2}{*}{$1$} &
$F_{1,1}, F_{6,0},$ &
\multirow{2}{*}{$\overline{F_{4,0}}, \overline{F_{1,3}}$} &
\multirow{2}{*}{$\overline{F_{3,1}}, \overline{F_{0,4}}$}
\\
& & $F_{3,3}, F_{0,6}$ & &

\\ \cline{2-5}
& $\sigma$ &
$F_{4,0}, F_{1,3}$ &
$G_{4,2}, G_{1,5}$ &
$\overline{G_{2,2}}$
\\ \cline{2-5}
& $\sigma^2$ &
$F_{3,1}, F_{0,4}$ &
$G_{2,2}$ &
$G_{5,1}, G_{2,4}$
\\ \hline
\end{tabular}
\end{center}
\renewcommand{\arraystretch}{1}

We can manipulate this to find the coordinate algebra of $W/G_5$;
$$\C[W/G_5] = \C[h, A_1, A_0, A_{-1}, B_1, B_0, B_{-1},
C_2, C_1, C_0, C_{-1}, C_{-2}],$$
with the generators given as

$$
\begin{array}{c}
h = F_{1,1}, \kern4pt
A_1 = 6F_{6,0}, \kern4pt
A_0 = 3F_{3,3}, \kern4pt
A_{-1} = 6F_{0,6}, \\
B_1 = -F_{4,0}F_{3,1} - \overline{F_{4,0}} \kern1pt \overline{F_{3,1}}, \kern4pt
B_0 = F_{3,1}F_{1,3} + \overline{F_{3,1}} \kern1pt \overline{F_{1,3}}, \kern4pt
B_{-1} = -F_{1,3}F_{0,4} - \overline{F_{1,3}} \kern1pt \overline{F_{0,4}}, \\
C_2 = {F_{4,0}}^3 + \overline{F_{4,0}}^3, \kern4pt
C_1 = -{F_{3,1}}^3 - \overline{F_{3,1}}^3, \kern4pt
C_0 = F_{4,0}{F_{1,3}}^2 + \overline{F_{4,0}} \kern1pt \overline{F_{1,3}}^2 + 4 {F_{1,1}}^3F_{3,3}, \\
C_{-1} = -{F_{1,3}}^3 - \overline{F_{1,3}}^3, \kern4pt
C_{-2} = {F_{0,4}}^3 + \overline{F_{0,4}}^3
\end{array}
$$
These generators have 35 relations, which have been listed in Appendix \ref{G5Relns}.
The generators as polynomials and the 35 relations are all real,
which gives that $W / G_5$ is split over $\R$.

We have an index-swapping symmetry $\phi$ coming from $\C[x,y,X,Y]$, given as $$\phi: x \mapsto X, y \mapsto Y, X \mapsto x, Y \mapsto y.$$
This occurs because $A \in \text{GL}_2(\C)$ acting on $V = \C^2$ acts as 
$ \begin{pmatrix}
A & 0 \\
0 & A^{-T}
\end{pmatrix} $
on the cotangent space $T^*V \cong \C^4$,
and $G_5$ is stable under inverse transpose.
$\phi$ belongs to GSp$_4(\C)$ and multiplies the symplectic form by $-1$.
It is an order 2 symmetry, which acts on the generators as
{$$
\begin{array}{c}
\phi: h \mapsto h, A_1 \mapsto A_{-1}, A_0 \mapsto -A_0, B_1 \mapsto -B_{-1}, B_0 \mapsto B_0, \\ C_2 \mapsto C_{-2}, C_1 \mapsto -C_{-1}, C_0 \mapsto C_0.
\end{array}
$$}

We have a useful projection $\rho$ onto $\C[x,y]$, given as
$$\rho: x \mapsto x, y \mapsto y, X \mapsto y, Y \mapsto -x.$$
Consider the bidegree given by specifying $x, y$ as degree $(1,0)$ and $X, Y$ as degree $(0,1)$.
Most of the polynomials considered are homogeneous to both coordinates of this bidegree.
Call this property being bi-homogeneous.
It can be proven that in $\C[x,y,X,Y]$,
the bi-homogeneous part of $\ker(\rho)$ equals the bi-homogeneous part of $\langle h \rangle$.
This can be used to match algebraic expressions in a given bidegree by their value under $\rho$, to show the difference is divisible by $h$.

Recall that the singular locus in $W/G_5$ is the image of the set of points in $W$ with non-trivial stabiliser in $G_5$.
The only non-identity elements of $G_5$ that can fix points in $W$ other than the origin are the symplectic reflections,
so the singular locus is the union of the images of the fixed point subspaces of the symplectic reflections of $G_5$.
Calculating this in Magma gives that the singular locus has radical ideal
$$
I := \langle R_{4,4}, R_{9,3}, {R_{9,3}}', R_{6,6}, {R_{6,6}}', R_{3,9}, {R_{3,9}}', R_{13,1}, R_{1,13}, R_{14,2}, R_{2,14}, R_{19,1}, R_{1,19}, R_{24,0}, R_{0,24} \rangle
$$
with the generators having expressions in the semi-invariants found earlier:

$$
\begin{array}{lll}
R_{4,4} := G_{2,2} \overline{G_{2,2}}, &
R_{9,3} := F_{3,1} \overline{F_{4,0}} \kern1pt \overline{G_{2,2}}, &
{R_{9,3}}' := F_{4,0} \overline{F_{3,1}} G_{2,2}, \\
R_{6,6} := F_{0,4} \overline{F_{4,0}} \kern1pt \overline{G_{2,2}}, &
{R_{6,6}}' := F_{4,0} \overline{F_{0,4}} G_{2,2}, &
R_{3,9} := F_{0,4} \overline{F_{1,3}} \kern1pt \overline{G_{2,2}}, \\
{R_{3,9}}' := F_{1,3} \overline{F_{0,4}} G_{2,2}, &
R_{13,1} := F_{4,0} \overline{F_{4,0}} G_{5,1}, &
R_{1,13} := F_{0,4} \overline{F_{0,4}} G_{1,5}, \\
R_{14,2} := F_{4,0} F_{3,1} \overline{F_{4,0}} \kern1pt \overline{F_{3,1}}, &
R_{2,14} := F_{1,3} F_{0,4} \overline{F_{1,3}} \kern1pt \overline{F_{0,4}}, &
R_{19,1} := F_{4,0} F_{3,1} \overline{F_{4,0}}^3, \\
R_{1,19} := F_{1,3} F_{0,4} \overline{F_{0,4}}^3, &
R_{24,0} := {F_{4,0}}^3 \overline{F_{4,0}}^3, &
R_{0,24} := {F_{0,4}}^3 \overline{F_{0,4}}^3, \\
\end{array}
$$

\section{Obtaining and Verifying a New Symplectic Singularity}

In this section we give the details of blowing up $\C^4/G_5$ at the reduced singular locus,
and the subsequent description of the new singularity \yogh.
We also give the verification of \yogh$ $ as an isolated symplectic singularity.

Blowing up at $I$, we can cover the blow-up with affine charts corresponding to the generators of the ideal.
Label the blow-up as $\pi_I: S_I \rightarrow \C^4/G_5$.
Amongst the affine charts, we can show that those corresponding to generators
$$R_{4,4}, R_{9,3}, R_{3,9}, R_{13,1}, R_{1,13}, R_{24,0}, R_{0,24}$$
cover $S_I$,
so considering these will be sufficient for our purposes.
Label these affine charts as 
$$X_{4,4}, X_{9,3}, X_{3,9}, X_{13,1}, X_{13,1}, X_{24,0}, X_{0,24}$$
respectively,
i.e. $X_{4,4}$ is the affine open subset of $S_I$ consisting of the points with the projective coordinate corresponding to $R_{4,4}$ being non-zero.
Using the symmetry $\phi$, we only need to calculate
$X_{4,4}, X_{9,3}, X_{13,1}$ and $X_{24,0}$
to check for new singularities.

\subsection{Affine charts of the blow-up}

\subsubsection{\texorpdfstring{$X_{24,0}$}{X_24,0}}

$$\C[X_{24,0}] = \C \left[ h, A_1, \ldots, C_{-2},
\frac{R_{4,4}}{R_{24,0}}, \frac{R_{9,3}}{R_{24,0}}, \ldots \frac{R_{0,24}}{R_{24,0}} \right] $$
We can simplify this by eliminating some of these generators.
For example, we can examine potential expressions of these generators in the bidegree $(1,1)$ to find
$$ h R_{24,0} = \frac{1}{6} i\sqrt{3} {A_1}^2 R_{13,1}
+ A_1 R_{19,1} + \frac{1}{2} C_2 R_{13,1}
$$
and dividing through by $R_{24,0}$ gives an equation to show that $h$ is redundant as a generator.
Similarly, we can examine potential expressions in the bidegree $(-10,2)$ to find
$$ R_{14,2} R_{24,0} = \frac{1}{3} i\sqrt{3} A_1 R_{13,1} R_{19,1} + {R_{19,1}}^2
$$
and dividing through by ${R_{24,0}}^2$ gives an equation to show that $\frac{R_{14,2}}{R_{24,0}}$ is redundant.
Proceeding in this manner, we can reduce the generating set of this coordinate algebra to 4 generators;
$\C[X_{24,0}] = \C[A_1, C_2,
\frac{R_{13,1}}{R_{24,0}},
\frac{R_{19,1}}{R_{24,0}} ] $.
Since $X_{24,0}$ is 4-dimensional, this must mean that this is a set of free generators for its coordinate algebra, and so $X_{24,0} \cong \C^4$ is smooth.

\subsubsection{\texorpdfstring{$X_{13,1}$}{X_13,1}}

Label the generators of $\C[X_{13,1}]$ as 
$f_{i,j} := \frac{R_{i,j}}{R_{13,1}},
f_{i,j}' := \frac{{R_{i,j}}'}{R_{13,1}}
$.
We can reduce the generating set of the coordinate algebra to 8 generators;
$$
\C[X_{13,1}] = \C[
h, A_1, f_{4,4}, f_{9,3}, f_{9,3}', f_{14,2}, f_{19,1}, f_{24,0}
]. $$

We claim that $X_{13,1}$ is isomorphic to the minimal nilpotent orbit closure in $\mathfrak{sl}_3$, denoted $a_2$.
It is well-known that the $2 \kern-2pt \times \kern-2pt 2$ minors generate the ideal in $\C[\mathfrak{sl}_3]$ vanishing on the minimal nilpotent orbit.
Hence, for $a_2 = \overline{\mathcal{O}_{\min}(\mathfrak{sl}_3)}$,
we can show that the map from $\C[a_2]$ into $\C[X_{13,1}]$
represented by the matrix
$$\begin{array}{c}
\begin{pmatrix}
-h - i \sqrt{3} f_{14,2} &
2i \sqrt{3} f_{9,3}' & f_{4,4} \\
-f_{19,1} & 2i \sqrt{3} f_{14,2} & f_{9,3} \\
-f_{24,0} &
2i \sqrt{3} f_{19,1} - 2 A_1 &
h - i \sqrt{3} f_{14,2}
\end{pmatrix},
\\
\textnormal{which can also be written as }
\frac{1}{R_{13,1}}
\begin{pmatrix}
G_{2,2} \\ F_{3,1} \overline{F_{4,0}} \\ {F_{4,0}}^2 \overline{F_{4,0}}
\end{pmatrix}
\begin{pmatrix}
-F_{4,0} \overline{F_{4,0}}^2 &
2i\sqrt{3} F_{4,0} \overline{F_{3,1}} & 
\overline{G_{2,2}}
\end{pmatrix},
\end{array}
$$
gives an isomorphism between $X_{13,1}$ and $a_2$.
This shows that $X_{13,1}$ is smooth except for a single isolated point,
which is not in any of the other considered $X_{i,j}$ and is of type $a_2$.

\subsubsection{\texorpdfstring{$X_{9,3}$}{X_9,3}}

Label the generators of $\C[X_{9,3}]$ as 
$g_{i,j} := \frac{R_{i,j}}{R_{9,3}},
g_{i,j}' := \frac{{R_{i,j}}'}{R_{9,3}}
$.
Define
$$
\begin{array}{ccccc}
a := A_1 - i\sqrt{3} h g_{14,2}, &
b := g_{4,4}, &
c := 3g_{6,6} - 3 h^2 g_{4,4}, &
d := g_{13,1}, &
e := g_{14,2}.
\end{array}
$$
We can reduce the generating set of the coordinate algebra to 5 generators;
$ \C[X_{9,3}] = \C[a,b,c,d,e]. $
Among the remaining 5 generators, there is 1 relation;
$$
abd(1 + 2be) + cd^2(-1 + 2be) + 6e(1 + be) = 0
$$
It is straightforward to check that this is smooth.

\subsubsection{\texorpdfstring{$X_{4,4}$}{X_4,4}}

Label the generators of $ R := \C[X_{4,4}]$ as
$k_{i,j} := \frac{{R_{i,j}}}{{R_{4,4}}},
k_{i,j}' := \frac{{R_{i,j}}'}{{R_{4,4}}}
$.
Reparametrise the generators as follows:
$$
\begin{array}{ccc}
a_1 := k_{3,9} + k_{3,9}', &
a_0 := -k_{6,6} - k_{6,6}' + 2h^2, &
a_{-1} := k_{9,3} + k_{9,3}', \\
b_1 := \frac{1}{3} i\sqrt{3} (-k_{3,9} + k_{3,9}'), &
b_0 := \frac{1}{3} i\sqrt{3} (k_{6,6} - k_{6,6}'), &
b_{-1} := \frac{1}{3} i\sqrt{3} (-k_{9,3} + k_{9,3}')
\end{array}
$$
We then use the following list of 12 generators:
$$
\begin{array}{c}
\begin{array}{ccc}
h, &
c_i := a_i + b_i, &
d_i := a_i - b_i,
\end{array}
\\
\begin{array}{ccccc}
z_2 := -2k_{1,13}, &
z_1 := \frac{2}{3} A_{-1} - 2hb_1, &
z_0 := \frac{2}{3} A_0, &
z_{-1} := \frac{2}{3} A_1 + 2hb_{-1}, &
z_{-2} := 2k_{13,1}
\end{array}
\end{array}
$$
That is, the coordinate algebra is generated over $\C$ as
$$R = \C[h, c_1, c_0, c_{-1}, d_1, d_0, d_{-1},
z_2, z_1, z_0, z_{-1}, z_{-2}].$$
This will be shown to be singular only at the origin.
Label this singularity as \yogh.

\subsection{Coordinate algebra}

Among these 12 generators of $\C[\textnormal{\textctyogh}]$ there are 35 relations;
$$
\begin{array}{cc}
c_1d_0 - c_0d_1 - 2hz_1 = 0 &
c_{-1}d_0 - c_0d_{-1} + 2hz_{-1} = 0 \\
c_1d_{-1} - c_0d_0 - 2hz_0 = 0 &
c_{-1}d_1 - c_0d_0 + 2hz_0 = 0 \\
c_1c_{-1} - {c_0}^2 - 2h^2d_0 = 0 &
d_1d_{-1} - {d_0}^2 - 2h^2c_0 = 0 \\
c_1z_1 - c_0z_2 + h{d_1}^2 = 0 &
d_1z_1 - d_0z_2 - h{c_1}^2 = 0 \\
c_{-1}z_{-1} - c_0z_{-2} - h{d_{-1}}^2 = 0 &
d_{-1}z_{-1} - d_0z_{-2} + h{c_{-1}}^2 = 0 \\
c_1z_0 - c_0z_1 + hd_1d_0 = 0 &
d_1z_0 - d_0z_1 - hc_1c_0 = 0 \\
c_{-1}z_0 - c_0z_{-1} - hd_{-1}d_0 = 0 &
d_{-1}z_0 - d_0z_{-1} + hc_{-1}c_0 = 0 \\
c_0z_1 - c_{-1}z_2 - 2h^3c_1 + hd_1d_0 = 0 &
d_0z_1 - d_{-1}z_2 + 2h^3d_1 - hc_1c_0 = 0 \\
c_0z_{-1} - c_1z_{-2} + 2h^3c_{-1} - hd_{-1}d_0 = 0 &
d_0z_{-1} - d_1z_{-2} - 2h^3d_{-1} + hc_{-1}c_0 = 0 \\
c_1z_{-1} - c_0z_0 + h{d_0}^2 = 0 &
d_1z_{-1} - d_0z_0 - h{c_0}^2 = 0 \\
c_{-1}z_1 - c_0z_0 - h{d_0}^2 = 0 &
d_{-1}z_1 - d_0z_0 + h{c_0}^2 = 0 \\
z_2z_0 - {z_1}^2 - h^2c_1d_1 = 0 &
z_{-2}z_0 - {z_{-1}}^2 - h^2c_{-1}d_{-1} = 0 \\
z_2z_{-1} - z_1z_0 - h^2c_1d_0 - h^2c_0d_1 = 0 &
z_{-2}z_1 - z_{-1}z_0 - h^2c_{-1}d_0 - h^2c_0d_{-1} = 0 \\
z_2z_{-2} - z_0^2 + 4h^6 - 4h^2c_0d_0 = 0 &
z_1z_{-1} - z_0^2 - h^2c_0d_0 = 0 \\
{c_1}^3 + {d_1}^3 + 2z_2z_1 = 0 &
{c_{-1}}^3 + {d_{-1}}^3 + 2z_{-2}z_{-1} = 0 \\
{c_1}^2c_0 + {d_1}^2d_0 + 2{z_1}^2 = 0 &
{c_{-1}}^2c_0 + {d_{-1}}^2d_0 + 2{z_{-1}}^2 = 0 \\
c_1{c_0}^2 + d_1{d_0}^2 + 2z_1z_0 = 0 &
c_{-1}{c_0}^2 + d_{-1}{d_0}^2 + 2z_{-1}z_0 = 0 \\
{c_0}^3 + {d_0}^3 + 2z_1z_{-1} = 0
\end{array}
$$

This has been verified to be a complete list by checking that the free algebra over $\Q$ with free generators given by the variables $h, \ldots, z_{-2}$ has the ideal generated by these relations as a prime ideal of dimension 4,
via computer calculation,
and then using this result to extend the method of proof from base field $\Q$ to $\C$.

The primality test used by computers uses elimination theory,
which can be extended to a higher field in our case,
provided the variable $x$ chosen to be eliminated at each successive step shows up in some relation in the form
$xy_i + f(y_1, \ldots, y_n) = 0$.
This is easy to confirm for this case, and all other cases in this paper.

A conceptual proof that	this ideal is prime, using the projection $\rho$ discussed earlier, is given in Appendix \ref{rhoAppendix}.

\subsection{Isolatedness}

It is clear from the set of relations that the origin is a singular point in \yogh.
To show that it is the only singular point,
we take advantage of the other affine charts we calculated.

Recall that $\C^4 / G_5$ is split over $\R$.
As the the ideal being blown up is stable under complex conjugation (acting as an $\R$-symmetry),
we know the affine charts $X_{9,3}' := \{R_{9,3}' = \overline{R_{9,3}} \neq 0 \}$ and $X_{3,9}' := \{R_{3,9}' = \overline{R_{3,9}} \neq 0 \}$ are symmetric to $X_{9,3}$ and $X_{3,9}$ respectively.
In particular they are also smooth.

To summarise, we know that $X_{24,0}, X_{0,24}, X_{9,3}$, $X_{9,3}'$, $X_{3,9}$ and $X_{3,9}'$ are smooth,
and that the only singularities in $X_{13,1}$ and $X_{1,13}$ are their respective origins.
Due to $f_{4,4}$ appearing as a generator in the coordinate algebra of $X_{13,1}$, and the symmetric result for $X_{1,13}$,
the origins for these affine charts lie outside of $X_{4,4}$.
Therefore, it suffices to show that $X_{4,4}$ without its origin is covered by its intersections with these other affine charts.
To prove this, it suffices to show that in the free algebra generated by $h, c_1, \ldots, z_{-2}$, all of the generators of $\C[\C^4/G_5]$ lie in the radical of the ideal which is generated by the 35 relations and the variables
$k_{9,3}, {k_{9,3}}', k_{3,9}, {k_{3,9}}', k_{13,1}, k_{1,13}, k_{24,0}, k_{0,24}$.
This can be verified via computer calculation.

\subsection{Normality}

To show that \textctyogh$ $ is a symplectic singularity, we first need to show that it is normal.
We show it is Cohen-Macaulay.
Combined with the fact that \yogh$ $ is nonsingular in codimension 1,
this implies normality via Serre's criterion.
Therefore, it remains to show that the local ring of \textctyogh$ $ at the origin is Cohen-Macaulay.
It suffices to show that the local ring has depth 4, i.e. a regular sequence of length 4 in the maximal ideal.
We can verify using a computer that $h, z_2, c_{-1}, c_1 + z_{-2}$ is a regular sequence of length 4 in $R$ that is contained in the maximal ideal corresponding to the origin.
This gives that it is the same in the localisation at the origin, proving that \textctyogh$ $ is Cohen-Macaulay, and so by extension normal.

\subsection{Symplectic singularity}

Recall the blow-up of $\C^4/G_5$ at $I$, $\pi_I: S_I \rightarrow \C^4/G_5$.
As a result of being defined in such a manner, $S_I$ has the property that any of its resolutions will compose to a resolution of $\C^4/G_5$.
Hence, by the definition of symplectic singularity,
$S_I$ can only fail to be a symplectic singularity if the extension of the pull-back of the symplectic form $\omega_0$ of $\C^4/G_5$ to $S_I$ is degenerate on the smooth locus ${S_I}^{sm}$.
We prove this by relating it to the condition of $\pi_I$ being semi-small.

Degeneracy of the symplectic form $\omega_I$ is equivalent to $\omega_I \wedge \omega_I = 0$, and so the subvariety it defines will either be empty or codimension 1.
Label this subvariety $T$.

Stratify $\C^4/G_5$ into its smooth locus, the singular points away from the origin, and the origin.
The blow-up $\pi_I$ acts as an isomorphism on the smooth locus, and so these points cannot lie in $T$.
The singular points away from the origin are locally isomorphic to $\C^2 \times A_2$, and so $\pi_I$ must act as a symplectic resolution, and so the preimages of these points cannot lie in $T$.

This leaves the preimage of the origin under the blow-up.
To show $\dim({\pi_I}^{-1}(0)) \leq 2$,
we describe its intersection with each of the affine charts in the cover as contained in a subvariety of dimension $2$.
These subvarieties are given by the ideals
$$
\langle A_1, C_2 \rangle \subseteq X_{24,0},
\langle h, A_1 \rangle \subseteq X_{13,1},
\langle h, A_1 \rangle \subseteq X_{9,3},
\langle h, z_0 \rangle \subseteq X_{4,4},
$$
and similar for the corresponding symmetric affine charts.
Therefore $\dim({\pi_I}^{-1}(0)) \leq 2$, so $\dim(T) \leq 2$, so $T$ must be empty.
This shows $S_I$ is a symplectic singularity, and so also \yogh.

\section{Properties of the New Singularity}

\subsection{Useful symmetries}

There are multiple useful symmetries of \textctyogh.

We still have the index-swapping symmetry $\phi$ coming from $\C[x,y,X,Y]$, given as $$\phi: x \mapsto X, y \mapsto Y, X \mapsto x, Y \mapsto y.$$
This is of order 2 and acts on the generators as
$$
\begin{array}{c}
\phi: h \mapsto h, c_1 \mapsto -c_{-1}, c_0 \mapsto c_0,
d_1 \mapsto -d_{-1}, d_0 \mapsto d_0, 
z_2 \mapsto -z_{-2}, z_1 \mapsto z_{-1}, z_0 \mapsto -z_0.
\end{array}
$$

We have an order 2 symmetry that swaps the $c_i$ and $d_i$.
Define an automorphism $\tau$ on $\C[x,y,X,Y]$ by sending
$$\tau: x \mapsto e^{\frac{5 \pi i}{12}}x, \kern5pt
y \mapsto e^{\frac{- \pi i}{12}}y, \kern5pt
X \mapsto e^{\frac{-5 \pi i}{12}}X, \kern5pt
Y \mapsto e^{\frac{\pi i}{12}}Y.$$
The monomials of the generators of $\C[x,y,X,Y]^{G_5}$ are multiplied by $1$ or $-1$ under this.
We can check that $\tau$ acts on the singularity by fixing $h$, swapping the $c_i$'s with the $d_i$'s, and multiplying the $z_i$'s by $-1$.

We have a symmetry $\chi$ of order 3 that fixes $h$ and the $z_i$, and multiplies $c_i$ by $\sigma = e^{\frac{2\pi i}{3}}$ and $d_i$ by $\sigma^2$.
This is specific to this singularity, and does not induce an action on $\C^4/G_5$.

These symmetries form a dihedral group of order 12.
It is easy to see that $\phi$ commutes with $\tau$ and $\chi$,
whilst $\tau$ and $\chi$ do not commute.

\subsection{Local fundamental group}

We now want to determine the local fundamental group of \yoghsp around its singular point, labelled $0$ in this section.

\begin{lemma}
The local fundamental group of \kern2pt \textnormal{\yogh} at $0$ is trivial.
\end{lemma}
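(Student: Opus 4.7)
The strategy is to reduce a local fundamental-group computation at $0$ to a global one by exploiting a contracting $\C^*$-action, and then to handle the global computation via a Zariski open cover. The first step is to verify that the assignment $\textnormal{wt}(h)=1$, $\textnormal{wt}(c_i)=\textnormal{wt}(d_i)=2$, $\textnormal{wt}(z_i)=3$ makes all 35 defining relations weighted-homogeneous of a common degree (for example the first relation $c_1d_0-c_0d_1-2hz_1$ is of degree $4$; the relation $c_1z_1-c_0z_2+hd_1^2$ is of degree $5$; the relation $z_2z_0-z_1^2-h^2c_1d_1$ is of degree $6$; and the cubic $c_1^3+d_1^3+2z_2z_1$ is of degree $6$). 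This is a finite direct check. The resulting $\C^*$-action contracts \yoghsp to $0$, so \yoghsp $\setminus \{0\}$ deformation-retracts onto the link $L$ of the singularity, and the local fundamental group at $0$ is $\pi_1(\myogh\setminus\{0\})=\pi_1(L)$.

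Next I would cover \yoghsp$\setminus\{0\}$ by the twelve principal open affines $D(f)$ with $f\in\{h,c_i,d_i,z_i\}$; this is a cover because the origin is the unique common zero of the generators. Using the dihedral symmetry group of order $12$ generated by $\phi$, $\tau$, and $\chi$, it suffices to analyze a small number of these opens. On $D(h)$, the symmetric relations $c_1c_{-1}-c_0^2=2h^2d_0$ and $d_1d_{-1}-d_0^2=2h^2c_0$ solve for $d_0$ and $c_0$, the first block of relations expresses all the $z_i$ as polynomials in $h,c_i,d_i$ divided by $h$, and the cubic relations reduce to a compatibility identity; this should exhibit $D(h)$ as a Zariski open subset of a smooth affine variety with trivial $\pi_1$. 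I would carry out the analogous reductions on $D(c_1)$ and $D(z_2)$ (the remaining orbit representatives under the dihedral symmetry) and then apply van Kampen: once each $D(f)$ is simply connected, showing that a generating family of overlaps $D(f)\cap D(g)$ is non-empty and connected suffices to conclude that $\pi_1(\myogh\setminus\{0\})=1$.

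\textbf{Main obstacle.} The genuine difficulty is not any one of the reductions but the overlap analysis: each intersection $D(f)\cap D(g)$ is the complement of a hypersurface in one of the pieces, and I need to verify that the generating loops in $\pi_1$ of such a complement become nullhomotopic in the ambient $D(f)$. As a sanity check and possible shortcut, one can use the blow-down map $\pi_I$. The target $\C^4/G_5$ is simply connected because $\C^4$ is simply connected and every element of $G_5$ fixes the origin, so by Armstrong's theorem any loop near $0$ in \yoghsp descends to a loop bounding a disc in $\C^4/G_5$. One would then try to lift that disc into \myogh, avoiding the isolated singular point, which is a real-codimension-$8$ condition inside the $8$-real-dimensional variety and hence generically achievable; the subtle point is controlling the lift across the exceptional fibre $\pi_I^{-1}(0)\cap X_{4,4}$, which has complex dimension at most $2$ by the estimate already used in the symplectic-singularity verification, so a general-position argument should close the gap.
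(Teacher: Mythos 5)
Your opening reduction (all generators have positive weight, so the contracting $\C^{\times}$-action identifies the local fundamental group with $\pi_1(\myogh\setminus\{0\})$) agrees with the paper, but the main computation contains a genuine gap. Your cover by the principal opens $D(f)$ rests on the expectation that each $D(f)$ is simply connected, and the justification offered --- exhibiting $D(h)$ as a Zariski open subset of a smooth affine variety with trivial $\pi_1$ --- does not deliver this: open subvarieties of simply connected varieties are typically not simply connected ($\C^{\times}\subset\C$). Indeed, the function $f$ is invertible on $D(f)$ and one should expect $\pi_1(D(f))$ to surject onto $\pi_1(\C^{\times})=\Z$; this is exactly what happens for the analogous opens the paper does use, $X_{9,3}\cap X_{4,4}$ and $X_{13,1}\cap X_{4,4}$, which are shown to be trivial fibre bundles over $\C^{\times}$ with simply connected fibres and hence have fundamental group $\Z$, not $1$. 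Once the pieces of the cover are not simply connected, van Kampen requires precise control of the maps on $\pi_1$ induced by all the overlaps, which is exactly the content you defer. (A smaller point: your elimination on $D(h)$ is circular, since $c_0$ and $d_0$ each appear in the relation you propose to solve for the other.) The paper sidesteps the need for a full cover by using that $\myogh\setminus\{0\}$ is irreducible, so $\pi_1(Z)\to\pi_1(\myogh\setminus\{0\})$ is surjective for \emph{any} non-empty open $Z$; it then takes the single open $X_{4,4}\cap(X_{9,3}\cup X_{13,1})$, computes $\pi_1$ of the two pieces and their intersection as $\Z$, $\Z$ and $\Z^2$ via explicit trivialisations, and shows the van Kampen pushout is trivial using the identity $f_{4,4}=g_{4,4}\,g_{13,1}^{-1}$ on the overlap. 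That cancellation is the key idea, and it is absent from your plan.

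The fallback via Armstrong's theorem also fails. A generating loop may be taken off the exceptional set, so it maps to a loop in the smooth locus of $\C^4/G_5$ and bounds a disc in $\C^4/G_5$ since the latter is simply connected; but general position cannot push that disc off the singular locus $\Sigma$, because transversality is not available at the non-manifold points of $\C^4/G_5$. Concretely, $G_5$ acts freely on the complement of the codimension-two fixed locus, so $\pi_1$ of the smooth locus of $\C^4/G_5$ is $G_5$ itself (order $72$); a disc bounding a loop representing a non-trivial class there is \emph{forced} to meet $\Sigma$, and a disc meeting $\Sigma$ does not lift along the blow-up. So the general-position argument cannot close the gap.
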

\begin{proof}
For the natural grading on $\C[x,y,X,Y]$,
all generators of $\C[\myogh]$ are homogeneous of positive degree,
giving a contracting $\C^{\times}$-action on \textctyogh$ $.
This gives that the local fundamental group of $(\textnormal{\textctyogh}, 0)$ is isomorphic to the fundamental group of \textctyogh $\setminus \{0\}$.
Since \yogh$ $ is irreducible, \yogh $\setminus \{0\}$ is also.
All closed strict subvarieties of this are therefore of strictly lower complex dimension,
so all open non-empty subvarieties $Z$ will have that the map $\pi_1(Z) \rightarrow \pi_1(\myogh \setminus \{0\})$ is surjective.
(\cite[Chapitre X, Théorème 2.3]{Godbillon})

This applies to the intersection of \textctyogh$ $ $ = X_{4,4}$ with any of the other $X_{i,j}$, however we take the intersection with $X_{9,3} \cup X_{13,1}$ because as we now prove, this has trivial fundamental group.

Label $U_1 := X_{9,3} \cap X_{4,4}$, $U_2 := X_{13,1} \cap X_{4,4}$, $U_3 := X_{9,3} \cap X_{13,1} \cap X_{4,4}$.
For ease of calculation, where possible we work in the coordinate algebra of $X_{9,3}$.
Consider $U_1$.
We can show that the map $\kappa_1: U_1 \rightarrow \C^\times$ given by projecting to the $g_{4,4}$ coordinate is a trivial fiber bundle, with fiber isomorphic to the space
$\C \times \{ (x,y,z) \in \C^3 \kern2pt | \kern2pt x^2 - y^2z = 1 \}$,
which has trivial fundamental group \cite[Lemma 4.1]{LevyEtAl}.
Applying the long exact sequence in homotopy shows that
$ \kappa_* : \pi_1(U_1) \rightarrow \pi_1(\C^\times)$
is an isomorphism.
This tells us that $\pi_1(U_1) \cong \Z$, and that the homotopy class of a loop in $U_1$ in this isomorphism is determined by how the $g_{4,4}$ coordinate winds around the origin of $\C^\times$.

Consider $U_2$.
We can show that the map $\kappa_2: U_2 \rightarrow \C^\times$ given by projecting to the $f_{4,4}$ coordinate is a trivial fiber bundle, with fiber isomorphic to $\C^3$.
This tells us that $\pi_1(U_2) \cong \Z$, and that the homotopy class of a loop in $U_2$ in this isomorphism is determined by how the $f_{4,4}$ coordinate winds in $\C^\times$.
For comparison in the coordinate algebra of $X_{9,3}$, note that in $U_3$, where the $g_{13,1}$ coordinate is non-zero, we have that
$$
f_{4,4} = \frac{\widetilde{R_{4,4}}}{\widetilde{R_{13,1}}}
= \frac{\widetilde{R_{4,4}}}{\widetilde{R_{9,3}}}
\frac{\widetilde{R_{9,3}}}{\widetilde{R_{13,1}}}
= g_{4,4} \kern1pt {g_{13,1}}^{-1}.
$$

Consider $U_3$.
We can show that the map $\kappa_2: U_2 \rightarrow \C^\times \times \C^\times$ given by projecting to the $g_{4,4}$ and $g_{13,1}$ coordinates is a trivial fiber bundle, with fiber isomorphic to $\C^2$.
This tells us that $\pi_1(U_3) \cong \Z \times \Z$, and that the homotopy class of a loop in $U_3$ is determined by how the $g_{4,4}$ and $g_{13,1}$ coordinates wind in $\C^\times$.

To apply the Seifert-van Kampen theorem, choose a basepoint $p \in U_3$ with $g_{4,4}$ and $g_{13,1}$ coordinates equal to 1.
In $U_1$, take a loop $\gamma_1$ based at $p$ whose $g_{4,4}$ coordinate winds around the origin in $\C^\times$ positively once, so that $[\gamma_1]$ generates $\pi_1(U_1) \cong \Z$.
In $U_2$, take a loop $\gamma_2$ based at $p$ whose $f_{4,4}$ coordinate winds in $\C^\times$ positively once, so that $[\gamma_2]$ generates $\pi_1(U_2) \cong \Z$.
In $U_3$, take a loop $\gamma_3$ based at $p$ whose $g_{4,4}$ coordinate winds in $\C^\times$ positively once, and whose $g_{13,1}$ coordinate stays constant as 1;
and take a loop $\gamma_4$ based at $p$ whose $g_{13,1}$ coordinate winds in $\C^\times$ positively once, and whose $g_{4,4}$ coordinate stays constant as 1.
Then $[\gamma_3]$ and $[\gamma_4]$ generate $\pi_1(U_3) \cong \Z^2$ freely.

With this set-up, we can determine the pushout of the diagram
$$
\pi_1(U_1) \leftarrow \pi_1(U_3) \rightarrow \pi_1(U_2).
$$
We have that 
$$
-[\gamma_1] \mapsfrom [\gamma_3] \mapsto 0
$$
and
$$
[\gamma_1] \mapsfrom [\gamma_4] \mapsto [\gamma_2],
$$
so the pushout is trivial.
Therefore, by the Seifert-van Kampen theorem, $\pi_1(X_{4,4} \cap (X_{9,3} \cup X_{13,1}))$ is trivial.
As this surjects onto $\pi_1(X_{4,4} \setminus \{ 0 \})$, this shows that the local fundamental group of $X_{4,4} = $ \textctyogh$ $ around $0$ is trivial.
\end{proof}

\subsection{Hilbert series}

Recall the natural grading on $\C[x,y,X,Y]$.
This induces a grading on $\C[\myogh]$, with $h$ of degree 2, $c_i$ and $d_i$ of degree 4, and $z_i$ of degree 6.

With this, it can be calculated (for example with Magma) that the Hilbert series is
$$
\frac{1 + 4t^4 + 4t^6 + 4t^8 + t^{12}}{(1 - t^2) (1 - t^4)^2 (1 - t^6)}.
$$

For contrast, the Hilbert series of $\C^4/G_5$ is 
$$
\frac{1 - t^4 + 2t^6 + 3t^8 - 2t^{10} + 3t^{12} + 2t^{14} - t^{16} + t^{20}}
{(1 - t^2) (1 - t^4) (1 - t^6) (1 - t^{12})}
$$

\section{Projective Tangent Cones of Isolated Symplectic Singularities}

By the results of the previous section, \yoghsp is a locally simply-connected, isolated, 4-dimensional symplectic singularity.
In this section we prove it is not (analytically) isomorphic to any previously known isolated 4-dimensional symplectic singularity.
Recalling the classification of these known cases into families in the introduction,
we can dismiss the quotient singularities due to \yoghsp being locally simply-connected.
This leaves the closures of minimal nilpotent orbits in simple Lie algebras,
the infinite family obtained from dihedral groups,
and a specific construction of toric hyper-Kähler varieties.

The key property we use to distinguish \yoghsp is that its projective tangent cone at the singular point is non-reduced.
Given the coordinate algebra $R = \C[\myogh]$ and the maximal ideal $\mathfrak{m} = \langle h, c_1, \ldots, z_{-2} \rangle$ at the singularity $0$,
recall that the projective tangent cone of \yoghsp at the singularity, $\mathbb{P}T_0(\myogh)$,
is given as the Proj of the associated graded algebra
$\textnormal{gr}_{\mathfrak{m}}(R) = \bigoplus_{i \geq 0}
\frac{\mathfrak{m}^i}{\mathfrak{m}^{i+1}}$.

All of these terms $\frac{\mathfrak{m}^i}{\mathfrak{m}^{i+1}}$ are invariant under localisation at $\mathfrak{m}$, and then under completion.
Hence, any analytic isomorphism between varieties $(X, x)$ and $(Y, y)$
clearly induces an isomorphism on the projective tangent cones, $\mathbb{P}T_x(X) \cong \mathbb{P}T_y(Y)$.
Therefore, the projective tangent cone at the singularity is indeed an invariant of an isolated symplectic singularity, and can be used to distinguish it from others.

As an associated graded algebra of $R$, we have $ \dim(\textnormal{gr}_{\mathfrak{m}}(R)) = 4$.
Therefore, the projective tangent cone Proj$(\textnormal{gr}_{\mathfrak{m}}(R))$ is reduced precisely when $\textnormal{gr}_{\mathfrak{m}}(R)$ is reduced as a ring,
and likewise Proj$(\textnormal{gr}_{\mathfrak{m}}(R))$ is in addition irreducible precisely when $\textnormal{gr}_{\mathfrak{m}}(R)$ is an integral domain.

Consider $R = \C[\textnormal{\yogh}]$,
with maximal ideal $\mathfrak{m} = \langle h, c_1, \ldots, z_{-2} \rangle$,
and coordinate algebra of affine tangent cone at the singularity $S = \textnormal{gr}_{\mathfrak{m}}(R)$.
Observing the relations for \yogh, we can see that the relation
$
{c_1}^2c_0 + {d_1}^2d_0 + 2{z_1}^2 = 0
$
in $R$ becomes the relation
$
{z_1}^2 = 0
$
in $S$.
As there are no linear terms in the relation list given,
it is clear that $z_1 \neq 0$ in $S$, so this is a nilpotent element.
Therefore $S$ is non-reduced,
so $\mathbb{P}T_0(\myogh)$ is non-reduced.

This is a property shared by many quotient singularities, for example the rational double points of types $D$ and $E$.
However, it appears to be exceptional for the case of being locally simply-connected.

For an isolated symplectic singularity $X$ with sole singular point $0$, the projective tangent cone $\mathbb{P}T_0(X)$ is smooth precisely for the case of closures of minimal nilpotent orbits of simple Lie algebras (\cite[p.1]{Beauville}).
Being smooth implies being reduced, so this case is covered.

\subsection{The dihedral family \texorpdfstring{$\mathcal{Y}$}{Y}(d)}

Consider the family of symplectic singularities $\mathcal{Y}(d)$ (\cite{LevyEtAl}).
The coordinate algebra $R_d = \C[\mathcal{Y}(d)]$ is $\C[q,Q,e,b_0, b_1, \ldots, b_d]$ with relations of the form
$$
\left\lbrace \begin{array}{l}
eb_j = qb_{j+1} + Qb_{j-1} \\
b_j b_k - b_{j-1} b_{k+1} \in \textnormal{span}_\C(\{q,Q,e\}^{d-2})
\end{array} \right.
\textnormal{for } 1 \leq j \leq k \leq d-1.
$$

The case $d = 4$ is the coordinate algebra of the $a_2$ singularity.
This is the closure of the minimal nilpotent orbit for $\mathfrak{sl}_3$,
and so the projective tangent cone is smooth,
notably also reduced.

Consider $d > 4$.
With $\mathfrak{m} = \langle q,Q,e, b_0, b_1, \ldots, b_d \rangle$,
the coordinate ring of the affine tangent cone at the singularity is
$S_d = \textnormal{gr}_{\mathfrak{m}}(R_d)$.
As $d - 2 > 2$, we have
$S_d = \C[q,Q,e,b_0, b_1, \ldots, b_d]$
with relations
$$
\left\lbrace \begin{array}{l}
eb_j = qb_{j+1} + Qb_{j-1} \\
b_j b_k - b_{j-1} b_{k+1} = 0
\end{array} \right.
\textnormal{for } 1 \leq j \leq k \leq d-1.
$$
We want to show this is an integral domain,
and we do so by showing it is a subring of another simpler integral domain.

Consider the algebra
$A := \C[q,Q,e,s,t] / \langle est = qt^2 + Qs^2 \rangle $,
the cyclic group $\mu_d = \langle g | g^d \rangle$,
and the action of $\mu_d$ on $A$ via
$$
g: q \mapsto q, Q \mapsto Q, e \mapsto e,
s \mapsto \zeta s, t \mapsto \zeta t
$$
for $\zeta = e^{\frac{2 \pi i}{d}}$ a primitive $d$-th root of unity.

Labelling $\widetilde{b_j} := s^{d-j}t^j$,
we can easily see
$A^{\mu_d} := \C[q,Q,e, \widetilde{b_0}, \widetilde{b_1}, \ldots, \widetilde{b_d}]$ with relations
$$
\left\lbrace \begin{array}{l}
e \widetilde{b_j} = q \widetilde{b_{j+1}} + Q \widetilde{b_{j-1}} \\
\widetilde{b_j} \widetilde{b_k} - \widetilde{b_{j-1}} \widetilde{b_{k+1}} = 0
\end{array} \right.
\textnormal{for } 1 \leq j \leq k \leq d-1.
$$
From this, it is easy to form the isomorphism $A^{\mu_d} \cong S_d$.

As $est - qt^2 - Qs^2$ is irreducible as a polynomial over $\C$, $A$ is an integral domain.
The ring of invariants $A^{\mu_d}$ is thus also an integral domain.
Therefore the coordinate algebra of the affine tangent cone, $S_d$, is an integral domain.
Hence, the projective tangent cone $\mathbb{P}T_0(\mathcal{Y}(d)) = $ Proj$(S_d)$ is irreducible, and in particular reduced.

\subsection{Toric hyper-Kähler varieties and Coulomb branches}
\label{toric}

Consider the family of symplectic singularities described in the paper by Namikawa, given as toric hyper-Kähler varieties (\cite{Namikawa}).
These all have a single isolated singularity.
We will only be interested in the 4-dimensional case,
but the results in this subsection work for arbitrary dimension,
and so will be given as such.

Toric hyper-Kähler varieties were introduced by Bielawski-Dancer (\cite{BielawskiDancer}).
Those we are referencing are constructed using the following data.
Let $a$, $b$ be positive integers such that $a \geq b + 2$.
Let $A$ (resp. $B$) be a $b \kern-2pt \times \kern-2pt a$-matrix
(resp. an $a \kern-2pt \times \kern-2pt (a-b)$-matrix)
with integer coefficients such that the following sequence is exact:
$$
0 \rightarrow \Z^{a-b} \xrightarrow{B} \Z^a \xrightarrow{A} \Z^b \rightarrow 0.
$$
Given this, a Hamiltonian action of $(\C^*)^b$ on the symplectic manifold $(\C^{2a}, \omega)$ is constructed,
with moment map $\mu: \C^{2a} \rightarrow \C^b$.
The toric hyper-Kähler variety $Y(A,0) = \mu^{-1}(0) / \kern-4pt / (\C^*)^b$
is then given as the GIT quotient of the preimage of $0$ under $\mu$, by the Hamiltonian action.

Those considered in Namikawa's paper have restrictions on $A$ which result in $Y(A,0)$ being an isolated symplectic singularity which is locally simply-connected.
However, we can consider a more general case.
As described by Namikawa (\cite[\S 5.ii]{NakajimaI}),
we can realise such a toric hyper-Kähler variety as a Coulomb branch of a 3-dimensional $\mathcal{N} = 4$ SUSY gauge theory with gauge group a torus.
Such a Coulomb branch is conical, so if it has isolated singularities,
it must have only one such, and it must be the origin.
To address this, we show that each of these has a reduced projective tangent cone at the origin.
These Coulomb branches have been given an explicit algebraic presentation (\cite[\S 4.i]{BFNII}), so this is the framework we will be using throughout this proof.

For a $2n$-dimensional Coulomb branch $C$, the torus gauge group $T$ must be rank $n$, so $T = (\C^{\times})^n$.
Let $Y = \Z^n$ denote the coweight lattice of $T$,
and $\mathfrak{t}$ the Lie algebra of $T$ with corresponding dual space $\mathfrak{t}^* \cong \C^n$.
The Coulomb branch also depends on a finite-dimensional representation $\mathbf{N}$ of $T$.
Choosing a basis of weight vectors for $\mathbf{N}$,
this amounts to a collection of characters $\xi_1, \kern1pt \ldots, \kern1pt \xi_m$,
viewed as elements of $\mathfrak{t}^*$.
Taking $x_1, \ldots, x_n \in \mathfrak{t}^*$ such that
$x_i(\lambda_1, \ldots, \lambda_n) = \lambda_i$,
we have that each character $\xi_i \in \textnormal{span}_{\Z} \{x_1, \ldots, x_n\} \cong \Z^n$.

To convert the previous data of a toric hyper-Kähler variety $Y(A,0)$ to the data of a Coulomb branch,
take the gauge group to be the torus $T = (\C^{\times})^{a-b}$,
and the representation of $T$ to be $\mathbf{N} = \C^a$,
considered as a representation of $T$ through $B$.
Specifically, the characters $\xi_1, \kern1pt \ldots, \kern1pt \xi_a \in \Z^{a-b}$ of $\mathbf{N}$ in this consideration are given by rows of $B$.

The coordinate algebra $R$ of $C$ is generated over $\C$ as 
$\C[x_1, \ldots, x_n, \{ r^{\lambda} : \lambda \in \Z^n \}]$.
Defining a function $d$ on $\Z^2$ as
$$
d(k,l) = \left\lbrace
\begin{array}{l}
0 \\
\min(|k|, |l|)
\end{array}
\begin{array}{l}
\textnormal{if $k$ and $l$ have the same sign,} \\
\textnormal{if $k$ and $l$ have different signs,}
\end{array} \right.
$$
we know the relations for $R$ are
$$
r^0 = 1, \kern5pt
r^{\lambda} r^{\mu} = \prod^m_{i=1} {\xi_i}^{d(\xi_i(\lambda), \xi_i(\mu))} r^{\lambda + \mu}.
$$

There are $n$ $\Z$-gradings of $R$, coming from $x_1, \ldots, x_n$ having degree 0
and $r^{\lambda}$ having degree equal to the $n$th coordinate of $\lambda$.

\begin{proposition}
The projective tangent cone of the Coulomb branch $C$ at $0$ is reduced.
\end{proposition}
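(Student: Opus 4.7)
The plan is to give an explicit squarefree presentation of the associated graded algebra $S := \textnormal{gr}_\mathfrak{m}(R)$ and then exhibit it as a subring of an integral domain, mirroring the strategy used above for the dihedral family $\mathcal{Y}(d)$.

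First I would fix the conical structure. The natural $\Z$-grading on $R$, with $x_i$ in degree $2$ and $r^\lambda$ in degree $\sum_i |\xi_i(\lambda)|$, is non-negative with $R_0 = \C$, so $\mathfrak{m}$ is the irrelevant ideal. A minimal generating set of $R$ consists of $x_1, \ldots, x_n$ together with $\{r^\lambda : \lambda \in \Lambda\}$, where $\Lambda \subset \Z^n$ is the union of the Hilbert bases of the top-dimensional cones of the hyperplane arrangement $\{\xi_i = 0\} \subset \R^n$; each such generator has $\mathfrak{m}$-adic order exactly one and hence contributes a degree-one element to $S$ under the $\mathfrak{m}$-adic grading.

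Next I would compute the initial form of each defining relation $r^\lambda r^\mu = \prod_i \xi_i^{d(\xi_i(\lambda), \xi_i(\mu))} r^{\lambda+\mu}$ under the $\mathfrak{m}$-adic filtration. In that filtration the left-hand side has order $2$, while the right-hand side has order $1 + \sum_i d(\xi_i(\lambda), \xi_i(\mu))$. The initial form is therefore either the full squarefree binomial $\bar{r}^\lambda \bar{r}^\mu - \xi_{i_0} \bar{r}^{\lambda+\mu}$ (when $\sum_i d(\xi_i(\lambda), \xi_i(\mu)) = 1$, so the two sides balance) or the squarefree monomial $\bar{r}^\lambda \bar{r}^\mu$ (when $\sum_i d(\xi_i(\lambda), \xi_i(\mu)) \geq 2$ and the right-hand side is strictly deeper). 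In both cases the initial ideal is generated by squarefree elements.

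Finally, I would exhibit $S$ inside an integral domain using auxiliary variables $u_1, v_1, \ldots, u_m, v_m$ via the BFN-style assignment
\[
\bar{r}^\lambda \longmapsto \prod_{i=1}^m u_i^{(\xi_i(\lambda))_+} v_i^{(-\xi_i(\lambda))_+},
\]
into $\C[u_i, v_i, x_j] / (u_i v_i - \xi_i)$, or a compatible regrading of its associated graded at the origin. A direct calculation confirms that every squarefree initial form computed above is respected by this map, and a Hilbert-series comparison—using that $S$ and $R$ share the same Hilbert series, which is known explicitly for toric Coulomb branches—establishes injectivity of the induced map from $S$. Since the target is an integral domain, $S$ embeds as a subring of a domain and is in particular reduced, proving that $\mathbb{P}T_0(C) = \textnormal{Proj}(S)$ is reduced. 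The main obstacle is precisely this third step: aligning the $\mathfrak{m}$-adic filtration on $R$ with a compatible filtration on the target so that the induced map on associated gradeds is injective. This reduces to a combinatorial statement about the chamber decomposition of $\R^n$ by the hyperplanes $\{\xi_i = 0\}$, and is parallel to the identification $S_d \cong A^{\mu_d}$ carried out in the $\mathcal{Y}(d)$ argument above.
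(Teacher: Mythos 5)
Your strategy has a genuine, fatal gap in the third step. You propose to prove reducedness of $S = \textnormal{gr}_{\mathfrak{m}}(R)$ by embedding it into an integral domain, ``mirroring'' the $\mathcal{Y}(d)$ argument. But $S$ is generally \emph{not} an integral domain, so no such embedding can exist. Your own computation of initial forms in step two already shows why: whenever $\sum_i d(\xi_i(\lambda),\xi_i(\mu)) \geq 2$ for two non-redundant generators $r^{\lambda}, r^{\mu}$ (which happens for most choices of $\mathbf{N}$), the relation degenerates to $\bar{r}^{\lambda}\bar{r}^{\mu}=0$ in $S$, producing zero divisors. The assignment $\bar{r}^{\lambda} \mapsto \prod_i u_i^{(\xi_i(\lambda))_+} v_i^{(-\xi_i(\lambda))_+}$ into $\C[u_i,v_i,x_j]/(u_iv_i-\xi_i)$ sends $\bar{r}^{\lambda}\bar{r}^{\mu}$ to $\prod_i \xi_i^{d(\xi_i(\lambda),\xi_i(\mu))}$ times the image of $\bar{r}^{\lambda+\mu}$, which is nonzero in that domain; so the map is not even a well-defined ring homomorphism on $S$, and passing to ``a compatible regrading of its associated graded'' reintroduces exactly the zero divisors you were trying to avoid. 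The paper makes this point explicitly in the remark following the proposition: the projective tangent cone of these Coulomb branches is reduced but usually \emph{reducible}, in contrast with $\mathcal{Y}(d)$ where it is irreducible. Irreducibility is the special feature of the dihedral family that lets the subring-of-a-domain argument work there; it does not transfer. (A secondary issue: even your step two needs the listed relations to form a standard basis for the $\mathfrak{m}$-adic filtration before you may conclude that their initial forms generate the defining ideal of $S$; squarefreeness of a generating set of initial forms of a non-basis proves nothing about the initial ideal.)

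The paper instead proves reducedness directly, without attempting to realise $S$ as a domain. It takes a hypothetical nilpotent $f$ with $f^k=0$, uses the $n$ independent $\Z$-gradings (degree of $r^{\lambda}$ equal to the coordinates of $\lambda$) to reduce to $f$ multihomogeneous, hence to $f \in r^{\lambda}\cdot\textnormal{Sym}(\mathfrak{t}^*)$ for a single $\lambda$, and then shows that $x_1,\dots,x_n$ and $r^{\lambda}$ freely generate a polynomial subalgebra of $\textnormal{gr}_{\mathfrak{m}}(R)$ by passing to a further associated graded with respect to $\langle x_1,\dots,x_n\rangle$, where all relations collapse to $r^{\mu}r^{\nu}=0$ or $r^{\mu+\nu}=0$ and the key identity $(r^{\lambda})^k = r^{k\lambda}$ (valid since $d(\xi_i(\lambda),\xi_i(\lambda))=0$) shows $r^{\lambda}$ satisfies no relation on its own. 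The contradiction with $f^k=0$ then gives reducedness while leaving room for the ring to have zero divisors elsewhere. You would need to replace your third step with an argument of this kind.
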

\begin{proof}

Let $\mathfrak{m} = \langle x_1, \ldots, x_n, \{ r^{\lambda} : \lambda \in \Z^n \} \rangle$ be the maximal ideal at $0$,
$\textnormal{gr}_{\mathfrak{m}}(R)$ be the coordinate ring of the affine tangent cone of $R$ at $0$,
and suppose $f \in \textnormal{gr}_{\mathfrak{m}}(R)$ is nilpotent.

The $\Z$-gradings of $R$ induce $\Z$-gradings on $\textnormal{gr}_{\mathfrak{m}}(R)$,
with the generators in the tangent cone taking the same degrees as they did in the original ring.

Take $k \in \N$ such that $f^k = 0$.
Fix a choice of one $\Z$-grading,
and let $l \in \Z$ be maximal such that $f_l$, the $l$-degree part of $f$, is non-zero.
Considering the expansion of $f^k = (\sum_{i \in \N} f_i)^k = 0$,
the maximum of the degrees of the terms involved is $lk$, of which there is only ${f_l}^k$.
Therefore ${f_l}^k = 0$.
Arguing by induction on $l$, without loss of generality we can assume $f$ is homogeneous.
Therefore, we can assume $f$ is homogeneous with respect to all of the $\Z$-gradings.

Using this and the relations of $R$, we can assume further that $f \in r^{\lambda} \cdot \textnormal{Sym} (t^*)$ for some $\lambda$.
Then, the condition $f^k = 0$ would give a non-trivial relation between $x_1, \ldots, x_n$ and $r^{\lambda}$,
so to find a contradiction it will suffice to prove that $x_1, \ldots, x_n$ and $r^{\lambda}$ freely generate a polynomial subalgebra of $R$.

Consider the ideal $I = \langle x_1, \ldots, x_n \rangle$ of $\textnormal{gr}_{\mathfrak{m}}(R)$, and the associated graded algebra of $\textnormal{gr}_{\mathfrak{m}}(R)$ with respect to $I$, $S := \textnormal{gr}_I(\textnormal{gr}_{\mathfrak{m}}(R))$.
Looking at the relations of $R$,
it is easy to deduce that $S$ is generated over $\C$ as $\C[x_1, \ldots, x_n, \{ r^{\lambda} : \lambda \in \Z^2 \}]$,
with every relation in one of the following forms:
$$
r^0 = 1, \kern5pt
r^{\lambda} r^{\mu} = 0, \kern5pt
r^{\lambda + \mu} = 0.
$$
Since there is no term involving $x_1, \ldots, x_n$ in the relations of $S$,
it is clear that they freely generate their subalgebra in $S$,
so we cannot have $f \in$ Sym$(t^*)$ nilpotent.
Therefore we must have $\lambda \neq 0$.

If we had $r^{\lambda} = 0$ in $S$, this would come from a relation of the third form.
That is, it would be given by a relation $r^{\mu_1} r^{\mu_2} = r^{\lambda}$ in $R$.
This would become $r^{\lambda} = 0$ in $\textnormal{gr}_{\mathfrak{m}}(R)$ however, giving the contradiction that $f = 0$ in $\textnormal{gr}_{\mathfrak{m}}(R)$.
Therefore $r^{\lambda} \neq 0$ in $S$.

As $d(\xi_i(\lambda), \xi_i(\lambda)) = 0$ for all $i$,
we observe the relation $(r^{\lambda})^k = r^{k \lambda}$ in $R$,
which gives the relation $r^{k \lambda} = 0$ in $S$.
Notably, it is observable that $r^{\lambda}$ cannot have a relation on its own, and so freely generates its subalgebra of $S$.
As there is no term involving $x_1, \ldots, x_n$ in the relations of $S$,
it follows that $x_1, \ldots, x_n$ and $r^{\lambda}$ freely generate their subalgebra in $S$.

A relation between these variables in $\textnormal{gr}_{\mathfrak{m}}(R)$ would induce one in $S = \textnormal{gr}_I(\textnormal{gr}_{\mathfrak{m}}(R))$.
Therefore, $x_1, \ldots, x_n$ and $r^{\lambda}$ freely generate their subalgebra in $\textnormal{gr}_{\mathfrak{m}}(R)$.
Therefore $f^k = 0$ is a contradiction,
so $\textnormal{gr}_{\mathfrak{m}}(R)$ is indeed reduced,
and the projective tangent cone $\mathbb{P}T_0(C) = $ Proj$(\textnormal{gr}_{\mathfrak{m}}(R))$ is reduced also.
\end{proof}

\begin{remark}
For most choices of $\textbf{N}$, there will be sufficient characters such that we will have a relation in $R$ of the form
$$
r^{\lambda} r^{\mu} = \prod^k_{i=1} {\xi_{n_k}} r^{\lambda + \mu}
$$
for some non-redundant generators $r^{\lambda}$ and $r^{\mu}$ with $k \geq 2$,
giving $r^{\lambda} r^{\mu} = 0$ in $\textnormal{gr}_{\mathfrak{m}}(R)$.
This gives that the coordinate algebra is not an integral domain,
so the projective tangent cone is not irreducible.
Therefore, while the projective tangent cone at the singularity is reduced for all these varieties,
it is reducible in most cases.
This gives an interesting contrast with the previous case of the $\mathcal{Y}(d)$, in which the projective tangent cone is always irreducible.
The overlap between these cases is $a_2$, which has smooth projective tangent cone.
\end{remark}

\section{\texorpdfstring{$\Q$}{Q}-factorial terminalisations}
\label{QFT}

In accordance with the minimal model program,
we would like to find the $\QFl$ terminalisations of the new symplectic singularity \yogh.

We will not be able to consider being locally simply-connected and being $\QFl$ terminal simultaneously, however.
For example, consider the Slodowy slice of the closure of the nilpotent orbit of Lie algebra $E_7$ with Bala-Carter label $A_1 + A_4$,
which will be isomorphic to a quotient of $a_2$ (\cite{NilpOrbLocGeom}).
This is $\QFl$ terminal as the closure of this nilpotent orbit is known to be (\cite{NilpOrbQFTs}).
As $a_2$ itself is not $\QFl$, this shows that taking the universal cover of an isolated symplectic singularity can lose the property of being $\QFl$ terminal.
Additionally, there are locally simply-connected isolated singularities whose $\QFl$ terminalisations are not locally simply-connected, for example $\bar{h}_{2,3}$ (Appendix \ref{h_2c3}).
This shows that taking the $\QFT$ of a singularity can lose the property of being locally simply-connected.

In this section we find all 12 $\QFT$s of \yogh, prove they are exhaustive, and extend this proof to find some of the $\QFT$s for $\C^4/G_5$.
In all blow-ups being considered, the center is a (reduced) prime divisor.
Since this has no effect on smooth points
 we will not be concerned about the effect on the smooth affine charts,
and instead only consider those containing singularities.

\subsection{Crepant partial resolutions of the new singularity}
The pre-image in $X_{4,4}$ of the singular locus in $\C^4/G_5$ has four irreducible components.
These four non-$\Q$-Cartier divisors are contained in a single $\langle \phi, \tau, \chi \rangle$-orbit of six divisors.
One divisor in this orbit is given by the ideal $J := \langle \widetilde{a_0}, w_1, w_0, w_{-1}, t_2, t_{-2} \rangle$, with
$$
\begin{array}{c}
\widetilde{a_0} := \frac{1}{2}c_0 + \frac{1}{2}d_0 - \frac{1}{3}h^2, \\
w_1 := z_1 + (-\frac{1}{6}i\sqrt{3} - \frac{1}{2})hc_1
+ (-\frac{1}{6}i\sqrt{3} + \frac{1}{2})hd_1, \\
w_0 := z_0 + \frac{2}{9}i\sqrt{3}h^3, \\
w_{-1} := z_{-1} + (-\frac{1}{6}i\sqrt{3} + \frac{1}{2})hc_{-1}
+ (-\frac{1}{6}i\sqrt{3} - \frac{1}{2})hd_{-1}, \\
t_2 := hz_2 + (-\frac{1}{4}i\sqrt{3} + \frac{3}{4}){c_1}^2
- \frac{1}{2}i\sqrt{3}c_1d_1
+ (-\frac{1}{4}i\sqrt{3} - \frac{3}{4}){d_1}^2, \\
t_{-2} := hz_{-2} + (-\frac{1}{4}i\sqrt{3} - \frac{3}{4}){c_{-1}}^2
- \frac{1}{2}i\sqrt{3}c_{-1}d_{-1}
+ (-\frac{1}{4}i\sqrt{3} + \frac{3}{4}){d_{-1}}^2, \\
\end{array}
$$
Label the ideals in the orbit as follows:
$$
J_1 := J, \kern4pt
J_2 := \phi(J_1), \kern4pt
J_3 := \chi(J), \kern4pt
J_4 := \phi(J_3), \kern4pt
J_5 := \chi^2(J), \kern4pt
J_6 := \phi(J_5), \kern4pt
$$
The irreducible components of the preimage of the singular locus are given by the ideals $ J_3, J_4, J_5$ and $J_6$.

Blowing up at $J$, we can cover the blow-up with affine charts given by the projective coordinates corresponding to generators of $J$ being non-zero.
Amongst those, we can show that the affine charts corresponding to $\widetilde{a_0}, w_1, w_{-1}, t_2, t_{-2}$
cover the blow-up, so considering these will be sufficient for our purposes.

Of these, the $w_1, w_{-1}, t_2, t_{-2}$ affine charts turn out to be smooth,
leaving us with just the $\widetilde{a_0}$ affine chart to consider.

Denoting
$$
W_1 := \frac{w_1}{\widetilde{a_0}}, 
W_0 := \frac{w_0}{\widetilde{a_0}}, 
W_{-1} := \frac{w_{-1}}{\widetilde{a_0}}, 
T_2 := \frac{t_2}{\widetilde{a_0}}, 
T_{-2} := \frac{t_{-2}}{\widetilde{a_0}},
$$
we form an isomorphism from $\mathbb{C}[\bar{h}_{2,3}]$
(see Appendix \ref{h_2c3} for generators and relations)
to the coordinate algebra of this affine chart
by setting the images of the generators as such:

$$
\begin{array}{c}
\xi_1 \mapsto W_0 - \frac{4}{3} i \sqrt{3} h, \kern4pt
\xi_2 \mapsto 2W_0 + \frac{4}{3} i \sqrt{3} h, \kern4pt
\xi_3 \mapsto -W_0, \\
r^{(1,0)} \mapsto W_1, \kern4pt
r^{(-1,0)} \mapsto 2W_{-1}, \\

r^{(3,1)} \mapsto \frac{-2}{\sqrt{3}} T_2 + i {W_1}^2, \\
r^{(2,1)} \mapsto \frac{4}{\sqrt{3}} h W_1 + 2i W_1 W_0
+ (-2 \sqrt{3} + 2i) c_1 + (2 \sqrt{3} + 2i) d_1, \\
r^{(1,1)} \mapsto \frac{16}{3} i h^2 + 4i {W_0}^2
+ (-4 \sqrt{3} + 4i) c_0 + (4 \sqrt{3} + 4i) d_0, \\
r^{(0,1)} \mapsto \frac{-32}{\sqrt{3}} h W_{-1} + 8i W_0 W_{-1}
+ (-8 \sqrt{3} + 8i) c_{-1} + (8 \sqrt{3} + 8i) d_{-1}, \\

r^{(-3,-1)} \mapsto \frac{-2}{\sqrt{3}} T_{-2} + i {W_{-1}}^2, \\
r^{(-2,-1)} \mapsto \frac{2}{\sqrt{3}} h W_{-1} + i W_0 W_{-1}
+ (\sqrt{3} + i) c_{-1} + (-\sqrt{3} + i) d_{-1}, \\
r^{(-1,-1)} \mapsto \frac{4}{3} i h^2 + i {W_0}^2
+ (\sqrt{3} + i) c_0 + (-\sqrt{3} + i) d_0, \\
r^{(0,-1)} \mapsto \frac{-4}{\sqrt{3}} h W_1 + i W_1 W_0
+ (\sqrt{3} + i) c_1 + (-\sqrt{3} + i) d_1
\end{array}
$$

The singularity $\bar{h}_{2,3}$ is known to have precisely 2 $\QFT$s,
given by blowing up at
$\langle \xi_3, r^{(3,1)}, r^{(2,1)}, r^{(1,1)}, r^{(0,1)} \rangle$
or
$\langle \xi_3, r^{(-3,-1)}, r^{(-2,-1)}, r^{(-1,-1)}, r^{(0,-1)} \rangle$.
These consist of one copy of the isolated $\Q$-factorial symplectic singularity $\C^4 / \mu_3$,
and two other affine charts isomorphic to $\C^4$.
This describes the remaining steps required to reach a $\Q$-factorial terminalisation of \yogh.

Considering the ratios of projective coordinates giving generators of coordinate algebras of affine charts,
we get rational functions of $\C^4$ which are defined in a dense subset of the preimage of the origin of $\C^4$ in a given $\QFT$.
These are properties which can distinguish between them, and can be used to note that all 12 of these given $\QFT$s of \yogh$ $ are distinct.

For example, one of the $\QFT$s in the $J_3$ case has the fractions
$$
\frac{F_{0,4} G_{2,4}}{{G_{2,2}}^2},
\frac{{G_{2,2}}^2 \overline{G_{2,2}}}{{G_{2,4}}^3},
$$
which can be used to distinguish it from the other 11 cases.
The first can determine it as the result of the $J_3$ blow-up, and the second will determine it amongst the two further blow-ups of that.
The irreducibility of many of the polynomials involved helps with the proof.
By symmetry, this extends to showing all 12 are distinct.

\subsection{Position in the hierarchy of crepant partial resolutions}
\label{fig:hyperplane}
We want to see how this aligns with the structure of all possible crepant partial resolutions.
Symplectic singularities have hyperplane arrangements associated to them via the minimal model programme (\cite{NamHyperplane}, \cite{HyperplaneArr}).
This has the action of a Namikawa Weyl group $W$.
For a given $\QFT$ $\pi: Y \rightarrow X$, we consider the closure of the movable cone $\text{Mov}(Y/X)$, which is a fundamental domain for the $W$-action.
Crepant partial resolutions of $X$ are in bijection with the faces of $\overline{\text{Mov}(Y/X)}$. (\cite[Corollary 4.10]{PartResoln}).
A crepant blow-up of a given partial resolution will move from the corresponding face to one of higher dimension that contains the original in its closure.

Consider the data collected by Thiel for the hyperplane arrangement associated to $\C^4/G_5$ (\cite{ThielG5}).
The dimensions of the faces involved range from 0 to 4.
The crepant partial resolution with \yoghsp as its only singularity is obtained after two blow-ups,
and a $\QFT$ is given after two more,
so \yoghsp must correspond to a 2-dimensional face.
We want to know the possible number of hyperplanes intersecting such a face.
The following code was written in Sage (\cite{Sage}) to determine this from Thiel's data.

\begin{lstlisting}[language=Python]
load("G5.sage");
P = A.intersection_poset();
values = set();
for x in range(len(P)):
    if P.rank_function(x) == 2:
        values.add(len(P.lower_covers(x)));
print(values);
\end{lstlisting}

From this, we find that there are 2, 3, 4 or 6 such.
Given a specific number of hyperplanes intersecting a codimension 2 face,
we can take a 2-dimensional slice normal to the face and consider the open subsections in the slice to conclude that there will be twice as many distinct $\QFT$s of that partial resolution.
See the below diagram of such a slice for an example of how 3 intersecting hyperplanes leads to 6 open subsections.
\begin{align*}
\begin{tikzpicture}
\draw (1,1) -- (-1,-1);
\draw (1,0) -- (-1,0);
\draw (1,-1) -- (-1,1);
\filldraw [blue] (0,0) circle (2pt);
\end{tikzpicture}
\end{align*}
Therefore, there are at most 12 4-dimensional faces containing a specific 2-dimensional face in their closure,
so there are at most 12 distinct $\QFl$ terminalisations of \yogh.
We conclude that the given list is exhaustive.

\begin{remark}
Note that we have only described 6 of the 12 crepant partial resolutions of \yogh given as 3-dimensional faces in the hyperplane arrangement,
but that this is sufficient to find all of the $\QFT$s.
\end{remark}

This property of having 12 $\QFT$s, and being two steps removed from each of them,
is somewhat exceptional amongst the currently known 4-dimensional, isolated, locally simply-connected symplectic singularities.
All other such which have had their $\QFl$ terminalisations described have only had 2 of them,
and they have only been one step removed from each.

$ $

The $\QFT$s of \yoghsp can used to find those of $\C^4/G_5$.
We know there are 92 such (\cite[Table 1]{HyperplaneArr}).
Recall that the blow-up of $\C^4/G_5$ at $I$ gave smooth affine charts other than $X_{13,1}$ and $X_{1,13}$ isomorphic to $a_2$, and $X_{4,4} =$ \yogh.

With $a_2$ as the closure of the minimal nilpotent orbit of $\mathfrak{sl}_3$,
it is well-known that it has precisely 2 $\QFT$s,
which also happen to be symplectic resolutions.

We can use $\langle \overline{G_{2,2}} \rangle \subseteq \C[\C^4/G_5]$ to form an ideal sheaf $\mathcal{I}$
which is principal for $X_{4,4}$:
$$\langle f_{4,4}, f_{9,3}, h - i\sqrt{3} f_{14,2} \rangle
\subseteq \C[X_{13,1}],
\biggl< \frac{R_{4,4}}{R_{1,13}}, \frac{{R_{3,9}}}{R_{1,13}},
h - i\sqrt{3} \frac{R_{2,14}}{R_{1,13}} \biggr> \subseteq \C[X_{1,13}].
$$
Blowing up at $\mathcal{I}$ leaves all other affine charts unaffected, including $X_{4,4}$, so can be followed on by the $\QFT$s of $X_{4,4}$ described above.

There is a symmetric ideal sheaf given by considering $\langle G_{2,2} \rangle \subseteq \C[\C^4/G_5]$ to get the complex conjugate $\bar{\mathcal{I}}$.
This similarly does not affect the $X_{4,4}$ affine chart, but it swaps the two $\QFT$s of the $X_{13,1}$ and $X_{1,13}$ affine charts with the other corresponding $\QFT$ of $a_2$.

Performing either of these blow-ups first allows these two simultaneous resolutions of the two $a_2$ singularities to combine with the 12 $\QFT$s of \yogh$ $ to give a total of 24 of the 92 $\Q$-factorialisations of $\C^4/G_5$.
A similar trick with the polynomials appearing in coordinate algebras can be used to show these are all distinct.

To summarise, among these crepant partial resolutions of $\C^4/G_5$,
we have found the new isolated symplectic singularity \yogh.
All of the others found were already known.
(The crepant partial resolutions of $\C^4/G_6$ have been fully described,
and found to contain no new isolated symplectic singularities (\cite{MyThesis}).)

\newpage

\appendix

\section{Relations for \texorpdfstring{$\C[\C^4/G_5]$}{C[C\^4/G\_5]}}
\label{G5Relns}

The 35 relations for the generators of $\C[\C^4/G_5]$ are as follows:

$$
\begin{array}{c}

2A_1A_{-1} - 2{A_0}^2 + 9h^2B_0, \\

A_1B_0 - A_0B_1 + 3hC_1, \\
A_{-1}B_0 - A_0B_{-1} - 3hC_{-1}, \\
A_1B_{-1} - A_0B_0 + 3hC_0 + 2h^4A_0, \\
A_{-1}B_1 - A_0B_0 - 3hC_0 + 2h^4A_0, \\

2B_1B_{-1} - 2{B_0}^2 - 6h^2{A_0}^2
- h^4B_0, \\

6A_1C_1 - 6A_0C_2 + 9h{B_1}^2
- h^3{A_1}^2, \\
6A_{-1}C_{-1} - 6A_0C_{-2} - 9h{B_{-1}}^2
+ h^3{A_{-1}}^2, \\
6A_1C_0 - 6A_0C_1 + 9hB_1B_0
- 4h^3A_1A_0, \\
6A_{-1}C_0 - 6A_0C_{-1} - 9hB_{-1}B_0
+ 4h^3A_{-1}A_0, \\
6A_1C_{-1} - 6A_0C_0 + 9h{B_0}^2
- 4h^3{A_0}^2, \\
6A_{-1}C_1 - 6A_0C_0 - 9h{B_0}^2
+ 4h^3{A_0}^2, \\
6A_1C_{-2} - 6A_0C_{-1} + 9hB_0B_{-1}
- 28h^3A_0A_{-1} - 36h^5B_{-1}, \\
6A_{-1}C_2 - 6A_0C_1 - 9hB_0B_1
+ 28h^3A_0A_1 + 36h^5B_1, \\

3B_1C_1 - 3B_0C_2 + 3h{A_1}^2A_0
+ 4h^3A_1B_1, \\
3B_{-1}C_{-1} - 3B_0C_{-2} - 3h{A_{-1}}^2A_0
- 4h^3A_{-1}B_{-1}, \\
6B_1C_0 - 6B_0C_1 + 6hA_1{A_0}^2
+ 4h^3A_0B_1 + h^3A_1B_0, \\
6B_{-1}C_0 - 6B_0C_{-1} - 6hA_{-1}{A_0}^2
- 4h^3A_0B_{-1} - h^3A_{-1}B_0, \\
6B_1C_{-1} - 6B_0C_0 + 6h{A_0}^3
+ 5h^3A_0B_0, \\
6B_{-1}C_1 - 6B_0C_0 - 6h{A_0}^3
- 5h^3A_0B_0, \\
6B_1C_{-2} - 6B_0C_{-1} + 6h{A_0}^2A_{-1}
+ 35h^3A_{-1}B_0 - 84h^4C_{-1}
+ 4h^7A_{-1}, \\
6B_{-1}C_2 - 6B_0C_1 - 6h{A_0}^2A_1
- 35h^3A_1B_0 - 84h^4C_1
- 4h^7A_1, \\

36C_2C_0 - 36{C_1}^2 + 54h^2A_1A_0B_1
- 18h^3C_2A_0 + 63h^4{B_1}^2
+ h^6{A_1}^2, \\
36C_{-2}C_0 - 36{C_{-1}}^2 + 54h^2A_{-1}A_0B_{-1}
+ 18h^3C_{-2}A_0 + 63h^4{B_{-1}}^2
+ h^6{A_{-1}}^2, \\
9C_2C_{-1} - 9C_1C_0 + 27h^2A_1A_0B_0
+ 36h^3C_1A_0 + 18h^4B_1B_0
+ 2h^6A_1A_0, \\
9C_{-2}C_1 - 9C_{-1}C_0 + 27h^2A_{-1}A_0B_0
- 36h^3C_{-1}A_0 + 18h^4B_{-1}B_0
+ 2h^6A_{-1}A_0, \\
2C_2C_{-2} - 2C_1C_{-1} + 9h^2A_1A_{-1}B_0
+ 48h^4{B_0}^2 + 32h^6A_1A_{-1}
+ 132h^8B_0 - 8h^{12}, \\
18C_1C_{-1} - 18{C_0}^2 + 27h^2A_1A_{-1}B_0
+ 126h^4{B_0}^2 + 8h^6{A_0}^2, \\

6C_2C_1 - 3{B_1}^3 + 2{A_1}^3A_0
+ 3h^2{A_1}^2B_1, \\
6C_{-2}C_{-1} - 3{B_{-1}}^3 + 2{A_{-1}}^3A_0
+ 3h^2{A_{-1}}^2B_{-1}, \\
6{C_1}^2 - 3{B_1}^2B_0 + 2{A_1}^3A_{-1}
+ 12h^2A_1A_0B_1 - 28h^3A_1C_1, \\
6{C_{-1}}^2 - 3{B_{-1}}^2B_0 + 2{A_{-1}}^3A_1
+ 12h^2A_{-1}A_0B_{-1} + 28h^3A_{-1}C_{-1}, \\
6C_1C_0 - 3B_1{B_0}^2 + 2A_1{A_0}^3 
+ 3h^2A_1A_0B_0 + 4h^3A_0C_1, \\
6C_{-1}C_0 - 3B_{-1}{B_0}^2 + 2A_{-1}{A_0}^3 
+ 3h^2A_{-1}A_0B_0 - 4h^3A_0C_{-1}, \\

18{C_0}^2 - 9B_1B_0B_{-1} + 6{A_0}^4
+ 9h^2{A_0}^2B_0 - 8h^6{A_0}^2.

\end{array}
$$

\newpage

\section{Use of \texorpdfstring{$\rho$}{ρ} to prove relations are complete}
\label{rhoAppendix}

We have that, for all $i$,
$$
\rho(h) = 0, \kern4pt
\rho(A_i) = 6F_{6,0} = \nu_A, \kern4pt
\rho(B_i) = 2F_{4,0}\overline{F_{4,0}} = \nu_B, \kern4pt
\rho(C_i) = {F_{4,0}}^3 + \overline{F_{4,0}}^3 = \nu_C.
$$
Consider the commuting diagram
$$
\begin{array}{ccc}
\textnormal{Free}(h, A_1, \ldots, C_{-2}) &

\xlongrightarrow{\pi_1} & \C[\C^4/G_5] \\ \\

\overline{\rho} \big\downarrow & & \big\downarrow \rho \\ \\

\textnormal{Free} (\nu_A, \nu_B, \nu_C) &
\xrightarrow{\pi_2} & \C[x,y]
\end{array}
$$

We want to find $\ker (\pi_1)$,
to find the list of relations of $\C^4/G_5$.

Since $\ker (\pi_1)$ will be generated by polynomials which are bi-homogeneous in the inherited bidegree grading,
we can replace all ideals $I$ involved in the calculations with their bi-homogeneous parts $I_{\bihom}$.

It is manually calculable that
$$ \ker (\rho)_{\bihom} = \langle h \rangle_{\bihom}, \kern5pt
\ker (\pi_2) = \langle
2{\nu_A}^4 - 3{\nu_B}^3 + 6{\nu_C}^2
\rangle .$$

We know
$$
\begin{array}{rl}

\ker (\pi_1) \subseteq & \kern-5pt
\ker (\rho \circ \pi_1)_{\bihom} =
\ker (\pi_2 \circ \overline{\rho})_{\bihom} = 
\overline{\rho}^{-1} (\ker (\pi_2))_{\bihom}
\end{array}
$$

$ \ker (\overline{\rho})_{\bihom} =
\langle h, A_1 A_{-1} - {A_0}^2, A_1 B_0 - A_0 B_1,
\textnormal{etc.} \rangle_{\bihom} $,
so
$$
\begin{array}{rl} \kern-10pt
\overline{\rho}^{-1} (\ker (\pi_2))_{\bihom} = \kern-4pt &
\overline{\rho}^{-1} (\langle
2{\nu_A}^4 - 3{\nu_B}^3 + 6{\nu_C}^2 \rangle)_{\bihom} \\
= \kern-4pt & \langle h, A_1 A_{-1} - {A_0}^2, A_1 B_0 - A_0 B_1, \ldots, \\
& 2{A_1}^3 A_0 - 3{B_1}^3 + 6 C_2 C_1,
2{A_1}^3 A_{-1} - 3{B_1}^2 B_0 + 6 C_1^2,
\ldots \rangle_{\bihom}.
\end{array}
$$

We've already found elements within
$\langle h \rangle_{\bihom}$
that can be added on to each of these other generators of
$\overline{\rho}^{-1} (\ker (\pi_2))_{\bihom}$
to make elements of $\ker (\pi_1)$.
Removing $h$ as a generator, label
$$
\begin{array}{l} \kern-10pt
I = \langle A_1 A_{-1} - {A_0}^2, A_1 B_0 - A_0 B_1, \ldots, \\
2{A_1}^3 A_0 - 3{B_1}^3 + 6 C_2 C_1,
2{A_1}^3 A_{-1} - 3{B_1}^2 B_0 + 6 C_1^2,
\textnormal{etc.} \rangle_{\bihom}.
\end{array}
$$
The following lemma will allow us to use this to justify having the full set of relations.

\begin{lemma} Let $R$ be a $\Z_{\geq 0}$-graded ring,
$f \in R$ an element of positive degree,
$I = \langle g_1, \ldots, g_n \rangle$ a homogeneous ideal of R,
$J \subseteq I + \langle f \rangle$ a prime ideal of $R$ with $f \not\in J$,
and $h_1, \ldots, h_n$ elements of $R$ such that $\forall i, g_i + fh_i \in J$.
Then $J = \langle g_1 + fh_1, \ldots, g_n + fh_n \rangle$.
\end{lemma}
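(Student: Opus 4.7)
The easy inclusion $K := \langle g_1 + f h_1, \ldots, g_n + f h_n \rangle \subseteq J$ is immediate from the hypothesis that each $g_i + f h_i \in J$. My plan for the reverse inclusion is a graded-Nakayama-style descent: first show $J \subseteq K + f J$ using the primality of $J$ and the rewriting identity $a_i g_i = a_i(g_i + f h_i) - f(a_i h_i)$, then invoke the $\Z_{\geq 0}$-grading to kill the quotient module $J/K$.

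For the first step, the hypothesis $J \subseteq I + \langle f \rangle$ lets me write any $j \in J$ as $j = \sum_i a_i g_i + b f$, and the identity above converts this to $j = k + f c$ with $k := \sum_i a_i(g_i + f h_i) \in K$ and $c := b - \sum_i a_i h_i \in R$. Then $f c = j - k \in J$ forces $c \in J$ by primality together with $f \notin J$, yielding $J \subseteq K + f J$. Iterating gives $J \subseteq K + f K + \cdots + f^{N-1} K + f^N J$ for every $N \geq 1$. To close, I use the grading: in the intended application $J$ is homogeneous (as the kernel of a map of graded rings), and each $h_i$ can be taken bi-homogeneous of degree $\deg(g_i) - \deg(f)$ — the correction terms produced in the appendix are manifestly of this form — making each $g_i + f h_i$ homogeneous and $K$ a homogeneous ideal. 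Then $J/K$ is a $\Z_{\geq 0}$-graded $R$-module satisfying $M = f M$ with $\deg(f) > 0$, so the lowest-degree component of any nonzero element gives an immediate contradiction; hence $J = K$.

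The main obstacle is exactly this grading bookkeeping. Without homogeneity of $K$ — equivalently, without being able to replace the $h_i$ by their correct homogeneous components, which itself rests on the homogeneity of $J$ plus primality to absorb the discarded pieces of $h_i$ into $J$ — the iteration $J \subseteq K + f^N J$ does not terminate and graded Nakayama does not apply. Once homogeneity is in hand the argument collapses to two applications of primality together with a graded Nakayama one-liner, and the lemma follows.
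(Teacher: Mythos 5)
Your argument is essentially the paper's: both proofs establish the containment $J \subseteq K + fJ$ by rewriting an expression $\sum_i a_i g_i + bf$ modulo the generators $g_i + fh_i$ and then using primality of $J$ together with $f \notin J$ to divide by $f$, and both then appeal to the positive grading to terminate the descent --- you by graded Nakayama applied to $M = J/K$, the paper by an induction on the degree of a putative element of $J \setminus K$. The one substantive difference is that you insist on the homogeneity of $K$ (equivalently, of $J$ and of suitably chosen $h_i$) before invoking the grading, and you are right to do so: the lemma as literally stated is false. Take $R = \C[s,t]$ with the standard grading, $f = t$, $I = \langle s \rangle$ with $g_1 = s$, $J = \langle s+t \rangle$, and $h_1 = 1 + s + t$; then every hypothesis holds and $g_1 + fh_1 = (s+t)(1+t) \in J$, yet $\langle (s+t)(1+t) \rangle \subsetneq J$ because $1+t$ is not a unit. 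The paper's proof breaks on this example exactly at the assertion that $y$ has strictly lower degree than $x$: starting from $x = s+t$, the reduction produces $x'' = -t(s+t)$ and hence $y = -(s+t)$, of the same degree as $x$, and the descent cycles forever. Your diagnosis --- that without homogeneity of $K$ the iteration $J \subseteq K + f^N J$ does not terminate --- is therefore not mere bookkeeping but identifies a missing hypothesis. With $J$ homogeneous and each $h_i$ replaced by its component of degree $\deg(g_i) - \deg(f)$ (legitimate in the intended application, where $J$ is a bi-homogeneous kernel and the correction terms are already of that form, and where primality absorbs the discarded components of $h_i$ into $J$), your graded Nakayama argument closes correctly, and the paper's degree induction is repaired for the same reason. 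So: same route, correct conclusion for the corrected statement, and your explicit treatment of the grading is a genuine improvement on the proof as written.
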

\begin{proof}
Let $J' = \langle g_1 + fh_1, \ldots, g_n + fh_n \rangle$.
Clearly $J' \subseteq J$.
Suppose for a contradiction that $\exists x \in J \setminus J'$.
By using $J' \subseteq J$ to replace $g_i$ with $-fh_i$ in the expression of $x \in I + \langle f \rangle$,
without loss of generality we can assume $x \in \langle f \rangle$.
Let $y = \frac{x}{f} \in R$.
$fy \in J$ and $f \not\in J$, so by primality of $J$, $y \in J$.
$x = yf$ and $x \not\in J'$ so $y \not\in J'$.
$y$ is of strictly lower degree than $x$, so induction can be used to give a contradiction that there must be a negative degree element of $J$.
\end{proof}

We can make use of this lemma by taking
$$
R = \textnormal{Free}(h, A_1, \ldots, C_{-2}), \kern5pt
f = h, \kern5pt
I \textnormal{ as above,} \kern5pt
J = \ker (\pi_1).
$$
This gives that the polynomials generating
$I$,
once adapted by addition of an element of $\langle h \rangle$,
give a full set of generators of $J$,
i.e. a full set of relations of $\C^4/G_5$.

This applies well in this case due to the natural grading of $\C[\C^4/G_5]$,
and the codomain of $\overline{\rho}$ being able to be chosen to be of only one Krull dimension above that of $\C[x,y]$ (which allows for $\ker (\pi_2)$ to have only one generator).

We can also use this justification for the relations of $\C[$\yogh$]$,
due to the grading of the total degree still being non-negative,
and the existence of the $h_i$ is given by the fact that $h$ is not contained in the prime ideal $I$ being blown-up at to give $S_I \supseteq$ \yogh.

However, the grading coming from the total degree is not guaranteed to be non-negative in some of the coordinate algebras of the affine charts, for example $X_{9,3}$.

\newpage

\section{Coordinate algebra for \texorpdfstring{$\bar{h}_{2,3}$}{h\_2,3} and \texorpdfstring{$\Q$}{Q}-factorial terminalisations}
\label{h_2c3}

The quasi-minimal singularity $ag_2$ appears originally as an elementary slice in the affine Grassmannian Gr$_{G_2}$ (\cite[\S 6.3.2]{QuasiMinOrig}).
It can also be given as $\bar{h}_{2,3}$, and realised as the Coulomb branch of a quiver gauge theory with gauge group $(\C^\times)^2$ (\cite[p.33]{AffGrass}):

\begin{center}

$ $

\begin{tikzpicture}
[gauge/.style = {circle, draw, thick, inner sep = 3pt},
flavour/.style = {rectangle ,draw, thick, minimum size=18pt},
scale = 1.2]

\node[gauge] (v1) at (0,1) {$1$};
\node[flavour] (w1) at (0,2) {$1$};
\draw (v1) -- (w1);

\node[gauge] (v2) at (1,1) {$1$};
\node[flavour] (w2) at (1,2) {$1$};
\draw (v2) -- (w2);

\draw[double distance = 4pt] (v1) -- (v2);
\draw (v1) -- (v2);
\draw[-{Stealth[width = 9pt, length = 9pt, inset = 3pt]}] (v1) -- (0.65,1);

\end{tikzpicture}
\end{center}

This is a non-simply laced quiver, and so the Coulomb branch can be given as the fixed point subvariety of the Coulomb branch of a corresponding simply laced quiver under the automorphism $\sigma$ (\cite[\S 4]{BFNAffGrass}):

\begin{center}

$ $

\begin{tikzpicture}
[gauge/.style = {circle, draw, thick, inner sep = 3pt},
flavour/.style = {rectangle ,draw, thick, minimum size=18pt},
scale = 1.2]

\node[gauge] (v1) at (0,1) {$1$};
\node[flavour] (w1) at (0,2) {$1$};
\draw (v1) -- (w1);

\node[gauge] (v2) at (1,2) {$1$};
\node[gauge] (v3) at (1,1) {$1$};
\node[gauge] (v4) at (1,0) {$1$};
\node[flavour] (w2) at (1,3) {$1$};
\draw (v1) -- (v2);
\draw (v1) -- (v3);
\draw (v1) -- (v4);
\draw (v2) -- (w2);

\draw[->, dotted, thick] (v2) -- (v3);
\draw[->, dotted, thick] (v3) -- (v4);
\draw[->, dotted, thick] (v4) .. controls (1.5,1) .. (v2);
\node at (1.6,1) {$\sigma$};

\end{tikzpicture}
\end{center}

In the context of the BFN framework for Coulomb branches of affine quiver gauge theories (\cite[\S 4]{BFNII}), the latter can be given by as a representation of $(\C^\times)^4$ with characters
$$ (-1,0,0,0), (1,-1,0,0), (1,0,-1,0), (1,0,0,-1), (0,1,0,0); $$
and so the former can be calculated to be given by a representation of $(\C^\times)^2$ with characters
$$ (-1,0), (1,-3), (0,1). $$
Using the link between the toric hyper-Kähler constructions (\ref{toric}), we can take the matrix $A = (1,1,3)$.
Therefore this is one of the varieties considered in \cite{Namikawa},
and so this is known to be isolated and locally simply-connected.

For the BFN presentation, the generators of the coordinate algebra reduce down to
$$
\C[\bar{h}_{2,3}] = \C[x, y, r^{(1,0)}, r^{(-1,0)}, r^{(3,1)}, r^{(2,1)}, r^{(1,1)}, r^{(0,1)}, r^{(-3,-1)}, r^{(-2,-1)}, r^{(-1,-1)}, r^{(0,-1)}].
$$
Note that within this, we have the characters specified as
$$
\xi_1 = -x, \kern10pt \xi_2 = x - 3y, \kern10pt \xi_3 = y.
$$
Examples of relations include
$$
r^{(3,1)} r^{(0,-1)} = y (r^{(1,0)})^3, \kern10pt
r^{(3,1)} r^{(0,1)} = r^{(2,1)} r^{(1,1)}.
$$

By the discussion in \cite{Namikawa},
this singularity has 2 $\QFT$s.
In our notation, they arise by blowing up at the ideals
$$\langle y, r^{(3,1)}, r^{(2,1)}, r^{(1,1)}, r^{(0,1)} \rangle
\text{ and } \langle y, r^{(-3,-1)}, r^{(-2,-1)}, r^{(-1,-1)}, r^{(0,-1)} \rangle.$$
It can easily be shown that either blow-up has a unique singular point,
locally isomorphic to $\C^4/\mu_3$.

\begin{remark}
We can generalise this analysis to the full collection of 4-dimensional singularities $\bar{h}_{2,k}$, as considered in (\cite[\S 3.3]{AffGrass}).
Note the singularities $h_{2,k}$ also listed are simply quotients of $\C^{4}$ by finite groups, and so not of significant interest here.
Forming $\bar{h}_{2,k}$ as a fixed point subvariety of a Coulomb branch with a simply laced quiver, with method similar to that above,
we can identify $\bar{h}_{2,k}$ of the singularities studied by Namikawa.
Hence, we conclude that these singularities are isolated, locally simply-connected, and have 2 $\QFT$s.
\end{remark}

\newpage

\bibliography{ScholarRefs}
\bibliographystyle{abbrv}

\end{document}